\theoremstyle{plain}
\newtheorem{theorem}{Theorem}
\newtheorem{lemma}[theorem]{Lemma}
\newtheorem{proposition}[theorem]{Proposition}
\theoremstyle{definition}
\newtheorem{definition}[theorem]{Definition}
\newtheorem{example}[theorem]{Example}
\newtheorem{remark}[theorem]{Remark}
\numberwithin{equation}{section}
\numberwithin{theorem}{section}
\newcommand{\id}[1]{\left\langle#1\right\rangle}
\newcommand{\Z}{\mathbb{Z}}
\newcommand{\F}{\mathbb{F}}
\newcommand{\Fx}[1]{{\F_{#1}[x]\over\id{x^2}}}
\newcommand{\Ra}{U_2(\F_2)}
\newcommand{\Rav}[1]{U_2(\F_{#1})}
\newcommand{\Rb}{\frac{\F_2\id{u,v}}{\id{u^3,v^3,vu,u^2-uv,v^2-uv}}}
\newcommand{\Rc}{\frac{\F_2\id{u,v}}{\id{u^3,v^2,vu,u^2-uv}}}
\newcommand{\Rd}{\frac{\F_4[x;\sigma]}{\id{x^2}}}
\newcommand{\Rev}[1]{\left(\begin{array}{cc}\Fx{#1}&0\\\F_#1&\F_#1\\\end{array}\right)}
\newcommand{\Rfv}[1]{\left(\begin{array}{cc}\Fx{#1}&\F_#1\\0&\F_#1\\\end{array}\right)}
\newcommand{\Rgv}[1]{\left(\begin{array}{cc}\F_#1&\F_{#1}^2\\0&\F_#1\\\end{array}\right)}
\newcommand{\Rhv}[1]{\left\{\left(\begin{array}{cccc}a&0&d&0\\0&a&0&0\\0&0&b&0\\0&c&0&b\\\end{array}\right)|a,b,c,d\in\F_#1\right\}}
\newcommand{\Rj}{\frac{\Z_4\id{u,v}}{\id{u^3,v^3,vu,u^2-uv,v^2-uv,2-uv,2u,2v}}}
\newcommand{\Rk}{\frac{\Z_4\id{u,v}}{\id{u^3,v^2,vu,u^2-uv,2-uv,2u,2v}}}
\newcommand{\Rlv}[1]{\left(\begin{array}{cc}\Z_{#1^2}&0\\\F_#1&\F_#1\\\end{array}\right)}
\newcommand{\Rmv}[1]{\left(\begin{array}{cc}\Z_{#1^2}&\F_#1\\0&\F_#1\\\end{array}\right)}
\newcommand{\Rn}{M_2(\F_2)}
\newcommand{\Rnv}[1]{M_2(\F_{#1})}
\newcommand{\Rmine}{\frac{\F_2\id{u,v}}{\id{u^3,v^3,u^2+v^2+vu,vu^2+uvu+vuv}}}
\newcommand{\diaA}{~\xymatrixrowsep{1pc}\xymatrixcolsep{1pc}\xymatrix{
\textrm{red.}\ar@{=>}[r]&\textrm{symm.}\ar@{=>}[r]&\textrm{rev.}\ar@{=>}[d]\\
\textrm{comm.}\ar@{=>}[ur]\ar@{=>}[r]&\textrm{r.duo}\ar@{=>}[r]&\textrm{semicomm.}\ar@{=>}[r]&\textrm{PS~I}\ar@{=>}[r]&\textrm{2-pmal}\ar@{=>}[r]&\textrm{NI}}}
\newcommand{\diaB}{~\xymatrixrowsep{1pc}\xymatrixcolsep{1pc}\xymatrix{
&&\textrm{reflexive}\ar@{-}[dr]\\
\textrm{red.}\ar@{=>}[r]&\textrm{symm.}\ar@{=>}[r]&\textrm{rev.}\ar@{=>}[d]\ar@{=>}[u]&+\ar@{=>}[l]\\
\textrm{comm.}\ar@{=>}[ur]\ar@{=>}[r]&\textrm{r.duo}\ar@{=>}[r]&\textrm{semicomm.}\ar@{=>}[r]\ar@{-}[ur]&\textrm{abelian}\ar@{=>}[r]&\textrm{PS~I}\ar@{=>}[r]&\textrm{2-pmal}\ar@{<=>}[r]&\textrm{NI}}}
\newcommand{\diaC}{~\xymatrixrowsep{1pc}\xymatrixcolsep{1pc}\xymatrix{
&&&\textrm{reflexive}\ar@{-}[dr]\\
\textrm{reduced}\ar@{=>}[r]&\textrm{commutative}\ar@{=>}[r]\ar@{=>}[dr]&\textrm{symmetric}\ar@{=>}[r]&\textrm{reversible}\ar@{=>}[d]\ar@{=>}[u]&+\ar@{=>}[l]\\
&&\textrm{duo}\ar@{=>}[r]&\textrm{semicommutative}\ar@{=>}[r]\ar@{-}[ur]&\textrm{abelian}\ar@{=>}[r]&\textrm{NI}
}}
\newcommand{\diaD}{~\xymatrixrowsep{1pc}\xymatrixcolsep{1pc}\xymatrix{
&&&&\textrm{refl.}\ar@{-}[d]\\
\textrm{red.}\ar@{=>}[r]&\textrm{comm.}\ar@{=>}[r]\ar@{=>}[dr]&\textrm{symm.}\ar@{=>}[r]&\textrm{rev.}\ar@{=>}[dr]\ar@{=>}[ur]&+\ar@{=>}[l]\\
&&\textrm{duo}\ar@{=>}[rr]&&\textrm{s.comm.}\ar@{=>}[r]\ar@{-}[u]&\textrm{abel.}\ar@{=>}[r]&\textrm{NI}
}}
\newcommand{\dia}{~\xymatrixrowsep{1pc}\xymatrixcolsep{1pc}\xymatrix{
&&\textrm{refl.}\ar@{-}[r]&+\ar@{=>}[dl]\\
\textrm{red.}\ar@{=>}[r]\ar@{=>}@/_/[dr]_{finite}&\textrm{symm.}\ar@{=>}[r]&\textrm{rev.}\ar@{=>}[u]\ar@{=>}[r]&\textrm{semicomm.}\ar@{-}[u]\ar@{=>}[r]&\textrm{PS~I}\ar@{=>}[r]\ar@{<=}@/^2pc/[r]^{local}&\textrm{2-pmal}\ar@{=>}[r]\ar@{<=}@/^2pc/[r]^{artinian}&\textrm{NI}\\
&\textrm{comm.}\ar@{=>}[u]\ar@{=>}[r]&\textrm{r.duo}\ar@{=>}[ur]
}}
\title[{Some Minimal Rings Related to 2-Primal Rings}]{Some Minimal Rings Related to\\ 2-Primal Rings}
\author{Steve Szabo}
\address{Department of Mathematics and Statistics, Eastern Kentucky University, Richmond, KY 40475}
\email{steve.szabo@eku.edu}
\begin{document}

\begin{abstract}
In a paper on the taxonomy of 2-primal rings, examples of various types of rings that are related to commutativity such as reduced, symmetric, duo, reversible and PS~I were given in order to show that the ring class inclusions were strict. Many of the rings given in the examples were infinite. In this paper, where possible, examples of minimal finite rings of the various types are given. Along with the rings in the previous taxonomy, NI, abelian and reflexive rings are also included.\bigskip

\noindent Keywords: finite rings, reversible rings, semicommutative rings, abelian rings, symmetric rings, minimal rings, NI rings, reflexive rings, reduced rings, duo rings, 2-primal rings
\end{abstract}

\maketitle

\section{Introduction and Overview}
As finite structures, in particular finite rings, become more and more prevalent and useful in various disciplines, having minimal examples becomes more important. In coding theory for instance, the most general class of rings that are useful are finite Frobenius rings. This was justified by Wood in \cite{wood_1999}. So, it is important to be aware of small Frobenius rings.  Another reason for minimal examples of various types of rings is simply to have more tangible examples for understanding. In the case of 2-primal rings, it is helpful to know that $\Ra$ is 2-primal but $\Rn$ is not.

In \cite{marks_2002}, the taxonomy of 2-primal rings, Marks states that, in reaction to a question by T. Y. Lam, he provided an example of a finite reversible nonsymmetric ring, namely $\F_2Q_8$, the $\F_2$ group algebra over the quaternion group. This prompted his ``funny little problem'' as he puts it. Is $\F_2Q_8$ a minimal reversible nonsymmetric ring? In \cite{szabo_2017}, the present author shows that $\F_2Q_8$ is indeed such a ring. It is also shown that
\[
\Rmine
\]
is a minimal reversible nonsymmetric ring as well. The two stated examples differ in that fact that $\F_2Q_8$ is right duo but the other ring is not. Hence minimality is independent of being duo. Finding minimal rings with a given property is many times not a trivial matter. Since there is no full characterization of finite rings and as of yet there is no way of listing rings of a particular order, finding rings  with particular properties especial minimal ones is a tedious process.

Minimal rings of various types have been found in recent years: in \cite{martinez_2015} minimal commutative Frobenius nonchain rings, in \cite{xue_1992} minimal noncommutative right duo rings, in \cite{xu_1998} minimal noncommutative semicommutative, in \cite{kim_2011} noncommutative reversible and reflexive rings. Interestingly enough, all such minimal rings are of order 16. Some other well known minimal rings are for instance, $\Fx{2}$ and $\Z_4$, the smallest chain rings that are not fields, $\Rd$, the smallest noncommutative chain ring,  $\Ra$, the smallest noncommutative ring.

A finite ring with identity is a direct sum of rings of prime power order for distinct primes. Representations of finite rings can be found in \cite{wilson_1974}. There have been full classifications of finite rings of order $p^n$ where $p$ is prime and $n\leq 5$: $p^3$ in \cite{gilmer_1973}, commutative $p^4$ in \cite{martinez_2015}, noncommutative $p^4$ in \cite{derr_1994}, nonlocal $p^5$ in \cite{corbas_2000} and local $p^5$ in \cite{corbas_2000_2}. The general classification for $n\geq 6$ is still open although there was some initial work done in \cite{beiranvand_2013} for $n=6$.

In \cite{marks_2003} there is an extensive set of references provided for the various types of rings covered in the taxonomy. For background on NI rings the reader is refered to \cite{hwang_2006,marks_2001} and for background on reflexive rings to \cite{kim_2011,kim_2003}. Although names for most of the ring types mentioned have been settled in the literature, semicommutative rings appear under alternative names. For instance in \cite{marks_2003}, Marks refers to semicommutative rings as rings having the S~I property. They have also been called zero insertive rings.

In Section \ref{sect_prelim}, definitions of the various ring types discussed are given as well as the ring class inclusions. It also covers some basic results on these ring types some known minimal rings needed. Section \ref{sect_main} has the main results on minimal rings of various types. It is split between into two subsections, one on reflexive rings and one on nonreflexive rings.

\section{Preliminaries}
\label{sect_prelim}
In this paper a ring has unity unless otherwise stated. Given a ring $R$, $J(R)$ is the Jacobson radical of $R$, $N(R)$ is the set of nilpotent elements of $R$, $\textrm{Nil}_*(R)$ is the lower nil radical of $R$ (the prime radical of $R$, the intersection of all prime ideals of $R$) and $\textrm{Nil}^*(R)$ is the upper nil radical of $R$ (the sum of all nil ideals of $R$). It is well known that $\textrm{Nil}_*(R)$ is a nil ideal and that $\textrm{Nil}^*(R)$ is the unique largest nil ideal of $R$. So, $\textrm{Nil}_*(R)\subset\textrm{Nil}^*(R)\subset N(R)$.

\begin{definition}
\label{def}
A ring $R$ is ...
\begin{enumerate}
\item{\it reduced} if $N(R)=0$ (equivalently, for all $a\in R$, $a^2=0$ implies $a=0$).
\item{\it symmetric} if for all $a,b,c\in R$, $abc=0$ implies $bac=0$.
\item{\it reversible} if for all $a,b\in R$, $ab=0$ implies $ba=0$.
\item{\it semicommutative} if for all $a,b\in R$, $ab=0$ implies $aRb=0$.
\item{\it reflexive} if for all $a,b\in R$, $aRb=0$ implies $bRa=0$.
\item{\it right (resp. left) duo} if for all $a,b\in R$, $ba\in aR$ ($ba\in Rb$) (equivalently, every right (left) ideal of $R$ is 2-sided). A ring that is both right and left duo is simply a {\it duo} ring.
\item{\it abelian} if each idempotent of $R$ is central.
\item said to satisfy {\it (PS I)} if for every $a\in R$, $R/\textrm{ann}^R_r(aR)$ is 2-primal.
\item{\it 2-primal} if $N(R)=\textrm{Nil}_*(R)$.
\item{\it NI} if $N(R)=\textrm{Nil}^*(R)$ (equivalently, $N(R)\lhd R$).
\end{enumerate}
\end{definition}

Of course a search for minimal rings only needs to consider finite rings. It turns out that in the class of finite rings, there is no distinction between the subclasses of PS~I, 2-primal and NI rings.

\begin{lemma}
\label{lemma_eq}
Let $R$ be a finite ring. Then $\textrm{Nil}_*(R)=\textrm{Nil}^*(R)=J(R)\subset N(R)$. Furthermore, the following are equivalent.
\begin{enumerate}
\item $R$ satisfies PS~I
\item $R$ is 2-primal
\item $R$ is NI
\end{enumerate}
\end{lemma}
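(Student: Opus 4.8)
The plan is to prove the radical identity $\textrm{Nil}_*(R)=\textrm{Nil}^*(R)=J(R)$ first and then read off the three equivalences from it, the only non-formal point being $(2)\Rightarrow(1)$.

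For the identity I would use that a finite ring is Artinian, so $J(R)$ is nilpotent. Being nilpotent, $J(R)$ is a nil ideal, hence $J(R)\subseteq\textrm{Nil}^*(R)$; and being nilpotent it lies in every prime ideal $P$ (from $J(R)^m\subseteq P$ and primeness one gets $J(R)\subseteq P$), hence $J(R)\subseteq\textrm{Nil}_*(R)$. Conversely, the image of the nil ideal $\textrm{Nil}^*(R)$ in the semisimple ring $R/J(R)$ is a nil ideal, and a finite semisimple ring — a finite product of matrix rings over division rings — has no nonzero nil ideal, so $\textrm{Nil}^*(R)\subseteq J(R)$. Together with the inclusions $\textrm{Nil}_*(R)\subseteq\textrm{Nil}^*(R)\subseteq N(R)$ already noted in the text, this yields $\textrm{Nil}_*(R)=\textrm{Nil}^*(R)=J(R)\subseteq N(R)$.

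Given this, $(2)\Leftrightarrow(3)$ is immediate, since $R$ being 2-primal means $N(R)=\textrm{Nil}_*(R)$, $R$ being NI means $N(R)=\textrm{Nil}^*(R)$, and these two radicals coincide. The implication $(1)\Rightarrow(2)$ is formal and needs no finiteness: taking $a=1$ in the definition of PS~I gives $\textrm{ann}^R_r(R)=0$, so $R=R/\textrm{ann}^R_r(R)$ is 2-primal. For $(2)\Rightarrow(1)$, assume $R$ is 2-primal; then $N(R)=J(R)$ by the first part, so $R/J(R)$ is reduced, and as a finite semisimple ring it is a finite product of (finite) fields. Fix $a\in R$ and set $I=\textrm{ann}^R_r(aR)$, which is a two-sided ideal (it is stable under right multiplication since $aR(xr)=(aRx)r$, and under left multiplication since $aR(rx)\subseteq aRx$), so $R/I$ is again a finite ring. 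By the first part applied to $R/I$, the ring $R/I$ is 2-primal iff $(R/I)/J(R/I)$ is reduced. Now $(R/I)/J(R/I)$ is a semisimple quotient of $R$, so it factors through $R/J(R)$ (any ideal of $R$ with semisimple quotient contains the nilpotent ideal $J(R)$), and every quotient ring of the product of fields $R/J(R)$ is again the product of a subcollection of those fields, hence reduced. Thus $R/I$ is 2-primal for every $a$, i.e. $R$ satisfies PS~I, closing the cycle.

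The step I expect to be the main obstacle is $(2)\Rightarrow(1)$: a finite 2-primal ring need not be local (for instance $\Ra$) and need not even be a product of local rings (it need not be abelian), so the ``local'' implication $2\text{-primal}\Rightarrow\text{PS I}$ recorded in the inclusion diagram cannot be applied factor by factor. The rescuing observation is that ``$R/J(R)$ is a product of fields'' is a property inherited by every ring-theoretic quotient, which forces $(R/I)/J(R/I)$ to be reduced irrespective of locality. A minor but necessary check along the way is that $\textrm{ann}^R_r(aR)$ is genuinely two-sided, so that the quotient appearing in the definition of PS~I is a ring.
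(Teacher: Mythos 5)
Your proof is correct, but it takes a more self-contained route than the paper. The paper obtains the radical identity $\textrm{Nil}_*(R)=\textrm{Nil}^*(R)=J(R)\subset N(R)$ by citing Proposition 10.27 of \cite{lam_2001}, and it disposes of the only nontrivial implication, 2-primal $\Rightarrow$ PS~I, by citing Proposition 3.15 of \cite{marks_2003} (a 2-primal semilocal ring with nil Jacobson radical satisfies PS~I), observing that a finite ring is semilocal with nil radical. You instead prove both ingredients from scratch: the radical identity via nilpotency of $J(R)$ in an Artinian ring (giving $J(R)\subseteq\textrm{Nil}_*(R)$ and $J(R)\subseteq\textrm{Nil}^*(R)$) together with the absence of nonzero nil ideals in the semisimple quotient $R/J(R)$; and 2-primal $\Rightarrow$ PS~I by noting that 2-primality of a finite ring is equivalent to reducedness of $R/J(R)$, that $R/J(R)$ is then a finite product of fields, and that for each $a$ the semisimple quotient $(R/\textrm{ann}^R_r(aR))/J(R/\textrm{ann}^R_r(aR))$ factors through $R/J(R)$ and hence is again a product of fields, so reduced. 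Your check that $\textrm{ann}^R_r(aR)$ is two-sided, and your explicit $(1)\Rightarrow(2)$ via $a=1$, are details the paper leaves implicit under ``clear from the definitions.'' What your approach buys is independence from the two cited external results, at the cost of a longer argument; what the paper's approach buys is brevity, since the cited propositions do exactly the work needed. Both are valid proofs of the same statement.
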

\begin{proof}
It is well known that $\textrm{Nil}_*(R)$ is a nil ideal and that $\textrm{Nil}^*(R)$ is the unique largest nil ideal of $R$. So, $\textrm{Nil}_*(R)\subset\textrm{Nil}^*(R)\subset N(R)$. From the definitions it is clear then that PS~I implies 2-primal which implies NI. By Proposition 10.27 in \cite{lam_2001}, $\textrm{Nil}_*(R)=\textrm{Nil}^*(R)=J(R)\subset N(R)$. So, if $R$ is NI then $R$ is 2-primal. Since $R$ is semilocal ($R/J(R)$ is semisimple) and $J(R)$ is nil ($J(R)\subset N(R)$), by Proposition 3.15 of \cite{marks_2003} which says a 2-primal semilocal ring with nil jacobson radical satisfies PS~I, if $R$ is 2-primal it satisfies PS~I.
\end{proof}

From now on, NI rings will be used when any of the 3 types in the previous lemma are needed.

\begin{lemma}[Proposition 3 in \cite{xue_1991}]
\label{lemma_duo}
A finite ring is right duo if and only if it is left duo.
\end{lemma}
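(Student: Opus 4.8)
The plan is to reduce the statement to a single pointwise identity and then exploit finiteness through the module structure. Since a ring is left duo precisely when its opposite ring is right duo, and finiteness passes to opposite rings, it suffices to prove that a finite right duo ring $R$ is left duo. As being left duo amounts to $aR\subseteq Ra$ for every $a$, while right duo gives $Ra\subseteq aR$ for every $a$, the whole content is the claim that $aR=Ra$ for each $a\in R$. Three routine facts will be used repeatedly: a right duo ring is semicommutative (if $ab=0$ and $r\in R$, writing $rb=bs$ gives $arb=abs=0$); it is abelian (for an idempotent $e$ one gets $er=ere=re$ by writing $r(1-e)=(1-e)s$ and $re=es'$); and for every $a$ both $\mathrm{ann}_r(a)$ and $\mathrm{ann}_l(a)$ are two-sided ideals. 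Finally, since $R$ is abelian and $R/J(R)$ is a finite semisimple right duo ring, hence a finite product of fields, the idempotents lift and exhibit $R$ as a finite product of local rings; the identity $aR=Ra$ can then be checked one local factor at a time, so I assume throughout that $R$ is local with maximal ideal $J$ and residue field $\mathbb{F}_q$.

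In a local ring every element is either a unit, for which $aR=R=Ra$ trivially, or lies in the nilpotent ideal $J$; so I may assume $a$ is nilpotent and argue by induction on $|R|$, with $a=0$ trivial. Let $k$ be the nilpotency index of $a$; if $k\ge 2$ and $a^{k-1}\ne 0$, then $I=a^{k-1}R$ is a nonzero two-sided ideal with $I^{2}=0$ and $I\subseteq aR$, so by the inductive hypothesis applied to the smaller local right duo ring $R/I$ we get $aR+I=Ra+I$, which — using $Ra\subseteq aR$ and $I\subseteq aR$ — rearranges to $aR=Ra+a^{k-1}R$. Now $a^{k-1}$ has square zero, so the square-zero case (treated next) yields $a^{k-1}R=Ra^{k-1}$, and $a^{k-1}=a^{k-2}\cdot a\in Ra$ forces $Ra^{k-1}\subseteq Ra$; hence $aR=Ra+a^{k-1}R\subseteq Ra$ and, with $Ra\subseteq aR$, $aR=Ra$. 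Thus everything reduces to the square-zero case.

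So suppose $a^{2}=0$ and $a\ne 0$; then $aR$ is a two-sided ideal with $(aR)^{2}=0$ (since $RaR=aR$) and $aR,Ra\subseteq\mathrm{ann}_r(a)\cap\mathrm{ann}_l(a)\subseteq J$. If $aR$ is a \emph{minimal} two-sided ideal the argument is clean: being minimal in a right duo ring, $aR$ is a simple right module and a simple bimodule, so $J\cdot aR=aR\cdot J=0$ and $aR$ is one-dimensional as a left $\mathbb{F}_q$-vector space; then $Ra$ is a nonzero left $\mathbb{F}_q$-subspace of it and $Ra=aR$. For the general square-zero case I would choose a minimal two-sided ideal $N\subseteq aR$ and pass to $R/N$, obtaining $aR=Ra+N$, after which one needs $N\subseteq Ra$. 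Controlling the intersection $N\cap Ra$ when $aR$ is not minimal — equivalently, showing that the left radical layer $J\cdot aR$ equals the right radical layer $aR\cdot J$, so that $aR$ is cyclic as a left $R$-module and $\dim_{\mathbb{F}_q} aR/JaR=1$ — is the delicate point, and I expect this to be the main obstacle: the square-zero hypothesis, the local structure and the nilpotence of $J$ all have to be used together here. One step in that direction is that $Ja\subseteq aJ$, since otherwise some $ja$ lies outside $aJ$, forcing every $s$ with $as=ja$ to be a unit, whence $a=j(as^{-1})\in J\cdot aR$ and then $aR=J\cdot aR=\cdots=J^{n}aR=0$, a contradiction.
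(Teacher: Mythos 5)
The paper does not actually prove this lemma---it is quoted as Proposition~3 of Xue's paper---so there is no internal argument to compare yours against; the question is only whether your argument stands on its own, and as written it does not. Your reductions are all sound: passing to the opposite ring, observing that right duo forces $Ra\subseteq aR$ so the content is $aR\subseteq Ra$, using abelianness to split $R$ into a product of finite local rings, discarding units, and inducting on the nilpotency index to reduce to $a^2=0$ (the step $aR\subseteq Ra+a^{k-1}R$ via the quotient by $I=a^{k-1}R$ checks out, as does the handling of the case where $aR$ is a minimal two-sided ideal, where $J\cdot aR=aR\cdot J=0$ makes $aR$ a one-dimensional $\mathbb{F}_q$-space on both sides).

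The problem is that every one of these reductions funnels into the single case you explicitly leave open: $R$ local, $a^2=0$, and $aR$ a non-minimal square-zero ideal. Passing to $R/N$ for a minimal ideal $N\subseteq aR$ gives $aR=Ra+N$, and since $JN=NJ=0$ makes $N$ a one-dimensional left $\mathbb{F}_q$-space, the remaining issue is precisely to rule out $N\cap Ra=0$; your observation that $Ja\subseteq aJ$ is correct but does not do this. So the proof is genuinely incomplete rather than merely terse---the unproved case is the base of the entire induction, not an edge case. A natural way to try to close it is a counting argument: $|aR|=|R|/|\mathrm{ann}_r(a)|$ and $|Ra|=|R|/|\mathrm{ann}_l(a)|$, so given $Ra\subseteq aR$ it would suffice to show $|\mathrm{ann}_l(a)|\leq|\mathrm{ann}_r(a)|$; but that inequality is exactly as hard as what you are missing, and until some version of it is established the lemma should be cited (as the paper does) rather than claimed proved.
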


\begin{lemma}
A finite reduced ring is commutative.
\end{lemma}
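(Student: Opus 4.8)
The plan is to reduce the statement to the Wedderburn structure theory of finite (semisimple) rings. First I would note that a finite ring $R$ is Artinian, so $J(R)$ is nilpotent; since $R$ is reduced we have $N(R)=0$, hence $J(R)=0$ and $R$ is semisimple Artinian. (This also re-proves the inclusion $J(R)\subset N(R)$ of Lemma \ref{lemma_eq} in the trivial direction needed here.)

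Next I would apply the Wedderburn--Artin theorem to write $R\cong\prod_{i=1}^{k}M_{n_i}(D_i)$ with each $D_i$ a finite division ring. The key step is to rule out $n_i\geq 2$: in $M_n(D)$ with $n\geq 2$ the matrix unit $E_{12}$ is a nonzero element with $E_{12}^2=0$, and since a direct summand of a reduced ring is reduced (a nonzero square-zero element in a factor yields one in $R$), this contradicts the hypothesis. Hence every $n_i=1$ and $R\cong\prod_{i=1}^{k}D_i$.

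Finally I would invoke Wedderburn's little theorem: every finite division ring is commutative. Therefore $R$ is a finite direct product of fields, and in particular commutative.

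I do not expect a genuine obstacle here; the proof is essentially an assembly of two classical theorems (Wedderburn--Artin and Wedderburn's little theorem). The only point requiring a moment of care is the observation that reducedness is inherited by direct summands, so that the matrix-unit argument in a single factor $M_{n_i}(D_i)$ already contradicts reducedness of $R$ itself.
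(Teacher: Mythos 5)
Your proof is correct, but it takes a different route from the paper. The paper quotes the general structure theorem for reduced rings (Theorem 12.7 in \cite{lam_2001}): every reduced ring is a subdirect product of domains; since the domains arising are quotients of $R$ and hence finite, each is a field, and $R$ embeds in a product of fields. You instead exploit finiteness from the start: $R$ Artinian and reduced forces $J(R)=0$, Wedderburn--Artin gives $R\cong\prod_i M_{n_i}(D_i)$, the square-zero matrix unit $E_{12}$ kills every factor with $n_i\geq 2$, and Wedderburn's little theorem finishes. Both arguments are sound and both ultimately lean on the same deep input (a finite division ring is a field). The paper's version is shorter on the page because the heavy lifting is outsourced to a theorem valid for arbitrary reduced rings; yours stays entirely within the classical theory of finite (semisimple) rings and yields the slightly stronger structural conclusion that $R$ is an honest direct product of finite fields rather than merely a subdirect product. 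Your remark that reducedness passes to direct factors is the right point to flag, and it is handled correctly: a nonzero square-zero element in a factor lifts to one in $R$.
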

\begin{proof}
From Theorem 12.7 in \cite{lam_2001}, a reduced ring is subdirect product of domains. Since a finite domain is a field, a finite reduced ring is commutative.
\end{proof}

\begin{lemma}
\label{lemma1}
A semicommutative ring is abelian.
\end{lemma}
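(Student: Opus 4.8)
The plan is to take an arbitrary idempotent $e\in R$ and show it commutes with every element, by applying the semicommutative hypothesis to the two ``complementary'' zero products $e(1-e)=0$ and $(1-e)e=0$ that $e$ automatically produces.

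First I would note that since $e^2=e$ we have $e(1-e)=0$. Semicommutativity then gives $eR(1-e)=0$, that is, $er(1-e)=0$ for every $r\in R$, which rearranges to $er=ere$ for all $r\in R$. Next I would run the symmetric argument: from $(1-e)e=0$ semicommutativity yields $(1-e)Re=0$, so $(1-e)re=0$ for all $r\in R$, i.e. $re=ere$ for all $r\in R$. Combining the two identities gives $er=ere=re$ for every $r\in R$, so $e$ is central. Since $e$ was an arbitrary idempotent, $R$ is abelian.

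There is essentially no obstacle here; the only ``idea'' needed is to feed the orthogonal idempotents $e$ and $1-e$ into the definition of semicommutativity and read off the two one-sided inclusions $er=ere$ and $re=ere$. (One should just make sure $1\in R$ is available so that $1-e$ makes sense and $er(1-e)=0$ really does say $er=ere$; this is fine since in this paper rings have unity.)
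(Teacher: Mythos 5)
Your proof is correct and is essentially identical to the paper's: both feed the orthogonal pair $e$, $1-e$ into the semicommutativity hypothesis to get $eR(1-e)=0$ and $(1-e)Re=0$, and then conclude $er=ere=re$. No further comment is needed.
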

\begin{proof}
Let $R$ be a semicommutative ring and $e$ an idempotent of $R$. Then $e(1-e)=0$ and $(1-e)e=0$. Since $R$ is semicommutative, $eR(1-e)=0$ and $(1-e)Re=0$. So, for $a\in R$,
\[
ea=ea(1-e+e)=eae=(1-e+e)ae=ae
\] and $e$ is central.
\end{proof}

\begin{lemma}
\label{lemma_j3}
Let $R$ be a local ring. If $R/J(R)$ is a prime field and $J(R)^3=0$ then $R$ is semicommutative.
\end{lemma}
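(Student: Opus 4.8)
The plan is to verify semicommutativity straight from the definition: I would fix $a,b\in R$ with $ab=0$ and aim to show $arb=0$ for every $r\in R$.

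The first step is to dispose of the case where $a$ or $b$ is a unit. Since $R$ is local, $R\setminus J(R)$ is precisely the set of units, so if $a\notin J(R)$ then $a$ is invertible and $ab=0$ forces $b=0$, whence $aRb=0$; the symmetric argument handles the case $b\notin J(R)$. This reduces the problem to the case $a,b\in J(R)$.

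The second step exploits the hypothesis that $R/J(R)$ is the prime field, say $R/J(R)\cong\F_p$ for a prime $p$. The prime subring $\Z\cdot 1$ of $R$ surjects onto $R/J(R)$ (the prime field is generated by $1$), so $R=\Z\cdot 1+J(R)$; hence any $r\in R$ can be written $r=k\cdot 1+j$ with $k\in\Z$ and $j\in J(R)$. Because $k\cdot 1$ is central, $arb=k(ab)+ajb=ajb$, and since $a,j,b\in J(R)$ this product lies in $J(R)^3=0$. Therefore $arb=0$ for all $r\in R$, i.e. $aRb=0$, and $R$ is semicommutative.

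I expect the only genuine obstacle to be handling the middle factor $r$ in $arb$: the key observation is that $r$ can be written as a central element plus an element of $J(R)$ — which is possible precisely because $R/J(R)$ is the prime field — and this collapses $arb$ into a triple product of radical elements, which vanishes exactly because $J(R)^3=0$. Thus both hypotheses are used, and in essentially the minimal way.
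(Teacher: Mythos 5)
Your proof is correct and follows essentially the same route as the paper's: decompose the middle factor $r$ as a central representative of its residue in $R/J(R)$ plus an element of $J(R)$, then use centrality to move the constant past $a$ and kill the remainder with $J(R)^3=0$. You are in fact slightly more careful than the paper in two places --- justifying that $a,b\in J(R)$ via the unit case, and explaining that the central representative exists because the prime field is generated by $1$ --- but the argument is the same.
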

\begin{proof}
Assume $J(R)^3=0$. Let $a,b\in R$ and assume $ab=0$. Then $a,b\in J(R)$. Let $r\in R$. Now, $r=\alpha+r'$ where $\alpha\in R/J(R)$ and $r'\in J(R)$. So,
$arb=a(\alpha+r')b=a\alpha b+ar'b=\alpha ab+ar'b=0$ since $\alpha\in Z(R)$ and $ar'b\in J(R)^3=0$. Hence, $R$ is semicommutative.
\end{proof}

\begin{lemma}
\label{lemma2}
A ring is reversible if and only if  it is semicommutative and reflexive.
\end{lemma}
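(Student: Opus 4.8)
The plan is to prove the two implications separately, working straight from Definition \ref{def} and exploiting the fact that our rings have a unity; that unity is what lets us instantiate the middle factor in an expression like $aRb$ at $1$, which is the small trick underlying both directions.

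For the implication ``semicommutative and reflexive $\Rightarrow$ reversible'', I would start from $ab=0$, apply semicommutativity to get $aRb=0$, then apply reflexivity to get $bRa=0$, and finally specialize the middle factor to $1$ to read off $ba=b\cdot 1\cdot a=0$. That settles reversibility with essentially no computation.

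For the converse, assume $R$ is reversible. I would first check $R$ is semicommutative: given $ab=0$, reversibility yields $ba=0$, hence $(ba)r=0$ for every $r\in R$; regrouping this as $b(ar)=0$ and applying reversibility a second time gives $(ar)b=arb=0$, so $aRb=0$. With semicommutativity now available, reflexivity is quick: if $aRb=0$ then in particular $ab=0$ (take the middle factor $1$), so $ba=0$ by reversibility, and semicommutativity applied to the relation $ba=0$ gives $bRa=0$.

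I do not expect a real obstacle here; the argument is a sequence of direct substitutions. The only point that needs a little care is the semicommutativity step in the converse, where reversibility must be used twice with the factors correctly associated — first viewing the product as $b\cdot(ar)$, then reading off $arb=0$ from $(ar)\cdot b=0$. Everything else follows immediately once one remembers that $1\in R$ forces $aRb=0\Rightarrow ab=0$.
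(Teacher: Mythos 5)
Your proof is correct and follows essentially the same route as the paper: the substantive direction (semicommutative and reflexive imply reversible) is argued identically, via $ab=0\Rightarrow aRb=0\Rightarrow bRa=0\Rightarrow ba=0$. The paper simply dismisses the other direction as ``clear,'' whereas you spell out the standard double application of reversibility to get $aRb=0$ from $ab=0$; that is a welcome but not essentially different elaboration.
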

\begin{proof}
Clearly, a reversible ring is semicommutative and reflexive. Let $R$ be semicommutative and reflexive. Let $a,b\in R$ and assume $ab=0$. By semicommutativity,  $aRb=0$. By reflexivity, $bRa=0$. So, $ba=0$ and $R$ is reversible.
\end{proof}

In \cite{corbas_2000} it was shown that given a finite ring $R$ there exists an orthogonal set of idempotents $\{e_1,\dots,e_m\}\subset R$ such that $e_1+\dots+e_m=1$, $R_i=e_iRe_i$ is a primary ring (a full matrix ring over a local ring) and for $i\neq j$, $M_{ij}=e_iRe_j$ is an $(R_i,R_j)$-bimodule contained in $J(R)$. Furthermore, letting $S=\oplus_iR_i$ and $M=\oplus_{i\neq j}M_{ij}$ then $R=S\oplus M$, $S$ is a subring of $R$ and $M$ is an additive subgroup of $J(R)$ which is an $S$-bimodule. For the purposes of the work herein, this will simply be called the {\it decomposition} of the finite ring $R$. Clearly, $J(R)=J(S)\oplus M$. Notice, if $M^2=0$ then multiplication in $R$ can be modeled as

\begin{equation}
\label{equ_mult}
(s,u)(t,v)=(st,su+vt).
\end{equation}

\begin{lemma}
\label{lemma_atame}
Let $R$ be a finite ring with ring decomposition $\{e_1,\dots,e_m\}$, $R_i=e_iRe_i$, $M_{ij}=e_iRe_j$, $S=\oplus_iR_i$ and $M=\oplus_{i\neq j}M_{ij}$. Then
\begin{enumerate}
\item $R$ is abelian if and only if $M=0$ and $R_i$ is local for $i\in\{1,\dots,m\}$.\label{lemma_atame_1}
\item $R$ is NI if and only if $R_i$ is local for $i\in\{1,\dots,m\}$.\label{lemma_atame_2}
\item If $R$ is abelian then $R$ is NI.\label{lemma_atame_4}
\item If $M^2=0$ and $M\neq 0$ then $R$ is nonreflexive.\label{lemma_atame_5}
\end{enumerate}
\end{lemma}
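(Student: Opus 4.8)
The plan is to prove the four parts in order, using throughout the description of $R/J(R)$ coming from the decomposition. Since $M\subseteq J(R)$, we have $R=S+J(R)$, and $S\cap J(R)=J(S)$ is immediate from $J(R)=J(S)\oplus M$ together with $R=S\oplus M$; hence the second isomorphism theorem gives a ring isomorphism $R/J(R)\cong S/J(S)\cong\oplus_i R_i/J(R_i)$. Writing each primary ring as $R_i=M_{n_i}(L_i)$ with $L_i$ local, one has $R_i/J(R_i)\cong M_{n_i}(\bar L_i)$ for the residue field $\bar L_i$, and $R_i$ is local precisely when $n_i=1$. This observation drives parts (\ref{lemma_atame_1}) and (\ref{lemma_atame_2}).

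For (\ref{lemma_atame_1}): if $M=0$ and each $R_i$ is local, then $R=\prod_i R_i$, and since a local ring has only the idempotents $0$ and $1$, every idempotent of $R$ is a tuple of $0$'s and $1$'s, hence central, so $R$ is abelian. Conversely, if $R$ is abelian then each $e_i$ is a central idempotent, so $M_{ij}=e_iRe_j=Re_ie_j=0$ for $i\neq j$ by orthogonality, giving $M=0$ and $R=\prod_i R_i$. Each direct factor $R_i$ is then abelian and primary; since $M_n(L)$ with $n\geq 2$ has the non-central idempotent $E_{11}$ (as $E_{11}E_{12}=E_{12}\neq 0=E_{12}E_{11}$), we conclude $n_i=1$, i.e. each $R_i$ is local.

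For (\ref{lemma_atame_2}): by Lemma \ref{lemma_eq}, $R$ is NI if and only if $N(R)=J(R)$, and since $J(R)$ is nilpotent (as $R$ is finite) this is equivalent to $R/J(R)$ being reduced. By the isomorphism above, $R/J(R)\cong\oplus_i M_{n_i}(\bar L_i)$, which is reduced exactly when every $n_i=1$, i.e. exactly when every $R_i$ is local. Part (\ref{lemma_atame_4}) is then immediate: by (\ref{lemma_atame_1}) an abelian ring has every $R_i$ local, so by (\ref{lemma_atame_2}) it is NI.

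For (\ref{lemma_atame_5}): since $M\neq 0$, fix $i\neq j$ with $M_{ij}\neq 0$ and pick $0\neq u\in M_{ij}=e_iRe_j$. For any $r\in R$ we have $ure_i=(e_iue_j)(e_jre_i)\in M_{ij}M_{ji}\subseteq M^2=0$, so $uRe_i=0$; on the other hand $e_iRu$ contains $e_iu=u\neq 0$. Taking $a=u$ and $b=e_i$ then shows $aRb=0$ while $bRa\neq 0$, so $R$ fails to be reflexive. None of this is deep; the only step requiring attention is the identity $uRe_i=0$, where the hypothesis $M^2=0$ is exactly what is used, the remaining manipulations being routine bookkeeping with the orthogonal idempotents $e_1,\dots,e_m$.
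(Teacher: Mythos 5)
Your proof is correct. Parts (\ref{lemma_atame_1}), (\ref{lemma_atame_4}) and (\ref{lemma_atame_5}) follow essentially the same line as the paper: for (\ref{lemma_atame_5}) in particular, the paper also picks a nonzero $d\in M_{12}$ and the idempotent $e_1$ and checks $dRe_1=0$ while $e_1Rd\ni d$, only phrasing the computation in the pair notation $(s,u)(t,v)=(st,sv+ut)$ of its displayed multiplication model rather than via the insertion $ure_i=(e_iue_j)(e_jre_i)\in M^2$ that you use. The real divergence is in part (\ref{lemma_atame_2}). The paper argues at the level of elements: for the ``only if'' direction it exhibits the matrix units $e_{12},e_{21}\in M_n(T)$, $n>1$, as nilpotents whose sum is not nilpotent, and for the ``if'' direction it takes $r=s+u\in N(R)$, expands $(s+u)^n=s^n+t$ with $t\in J(R)$, and concludes $s\in N(S)=J(S)$, hence $N(R)=J(R)$. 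You instead establish the ring isomorphism $R/J(R)\cong S/J(S)\cong\oplus_iM_{n_i}(\bar L_i)$ via the second isomorphism theorem (using $M\subseteq J(R)$ and $J(R)=J(S)\oplus M$, both recorded in the paper's setup) and invoke the criterion that a finite ring is NI exactly when $R/J(R)$ is reduced, which you correctly justify from Lemma \ref{lemma_eq} and the nilpotence of $J(R)$. Both are valid; your route is more structural and makes the equivalence transparent in one stroke, while the paper's is more elementary and self-contained, avoiding any appeal to the quotient isomorphism.
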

\begin{proof}~

\noindent (\ref{lemma_atame_1}) This follows directly from the definition of an abelian ring.

\noindent (\ref{lemma_atame_2}) Assume $R_1$ is not local. Since $R_1$ is a primary ring, $R_1=M_n(T)$ for $n>1$ and some local ring $T$. Let $e_{ij}$ be the $(i,j)$ matrix unit of $R_1$. Then $e_{12}$ and $e_{21}$ are nilpotent but $e_{12}+e_{21}$ is not. So, $R$ in this case is not NI.

Now assume $R_i$ is local for $i\in\{1,\dots,m\}$. Then $N(S)=J(S)$. By Lemma \ref{lemma_eq}, $J(R)\subset N(R)$. Let $r\in N(R)$. Then $r=s+u$ for some $s\in S$ and $u\in M\subset J(R)$ and for some $n$,
$0=(s+u)^n=s^n+t$ for some $t\in J(R)$ since $M\subset J(R)$. So, $s^n=-t$ and then $s^n\in J(R)\cap S=N(S)$. Hence, $s\in N(S)=J(S)$ and $r\in J(R)$ showing $N(R)=J(R)$ and $R$ is NI.

\noindent (\ref{lemma_atame_4}) Follows from (\ref{lemma_atame_1}) and (\ref{lemma_atame_2}).

\noindent (\ref{lemma_atame_5}) Assume $M^2=0$ and $M\neq 0$. Renumbering if necessary, it may be assumed that $M_{12}\neq 0$. Since $M^2=0$, multiplication in $R$ can be modeled as in \ref{equ_mult}. Let $d\in M_{12}\setminus 0$ and $(s,u)\in R$. Then
\[
(0,d)(s,u)(e_1,0)=(0,d)(se_1,ue_1)=(0,0).
\]
showing $(0,d)R(e_1,0)=0$. But,
\[
(e_1,0)(e_1,0)(0,d)=(0,d)
\]
showing $(e_1,0)R(0,d)\neq 0$. Hence, $R$ is nonreflexive.
\end{proof}

Figure \ref{fig_1} has the ring class inclusions of the types of  finite rings being considered. The implications not covered in the lemmas above follow directly from the definitions.

\begin{figure}[!htp]
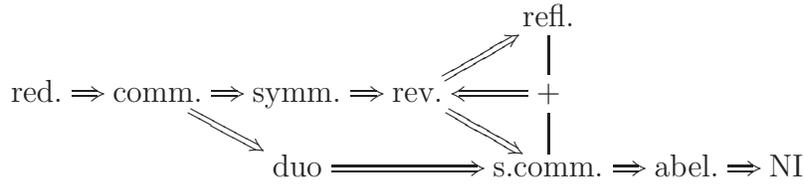

\label{fig_1}
\diaD
\caption{Finite Rings}
\end{figure}

It is easily seen that if * is one of the ring types being considered, a finite ring is of type * if and only if every ring direct summand is of type *. This is why when searching for minimal rings of these types, only indecomposable rings need to be checked. The only exception is a non-NI nonreflexive ring (see Theorem \ref{theo_nninref}). Since a finite ring with identity is a direct sum of rings of prime power order for distinct primes, the search is further limited to rings of prime power order.

\begin{proposition}[\cite{eldridge_1968}]
\label{prop_eld}
For prime $p$, there are no noncommutative rings of order $p$ or $p^2$ and the only noncommutative ring of order $p^3$ is $\Rav{p}$.
\end{proposition}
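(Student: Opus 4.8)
The plan is to reduce, using the additive group of $R$, to the case of a three–dimensional $\F_p$–algebra and then to analyze its Jacobson radical. First I would dispose of the additive group. If $(R,+)$ is cyclic then $R$ is generated as a ring by a single element, so $R$ is commutative; this already handles $(R,+)\cong\Z_p,\Z_{p^2},\Z_{p^3}$. If $|R|=p^2$ with $(R,+)\cong\Z_p\oplus\Z_p$ then $\mathrm{char}\,R=p$, so $R=\F_p 1\oplus\F_p a$ for any $a\notin\F_p 1$, and the product of $\alpha+\beta a$ and $\gamma+\delta a$ is symmetric in the two factors, so $R$ is commutative. If $|R|=p^3$ with $(R,+)\cong\Z_{p^2}\oplus\Z_p$ then $\mathrm{char}\,R=p^2$ (a characteristic–$p$ ring has additive exponent $p$), so $\Z\cdot 1\cong\Z_{p^2}$; choosing $a$ of additive order $p$ outside $\Z\cdot 1$ gives $R=\Z\cdot 1\oplus\Z a$, hence $R$ is additively spanned by $\{1,a\}$ and is again commutative. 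Thus a noncommutative ring of order $p$, $p^2$ or $p^3$ must have order $p^3$ and characteristic $p$, i.e.\ it is a three–dimensional unital $\F_p$–algebra with $(R,+)\cong\Z_p^3$.

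Now let $R$ be such an algebra and $J=J(R)$, which is nilpotent. The quotient $R/J$ is semisimple of $\F_p$–dimension at most $3$; since a full matrix block would contribute dimension at least $4$, $R/J$ is a product of finite fields, hence commutative. Therefore $J\neq 0$ (otherwise $R\cong R/J$ is commutative) and $J\neq R$ (as $1\notin J$), so $\dim_{\F_p}J\in\{1,2\}$. If $\dim J=2$, then $R$ is local with residue field $\F_p$: when $J^2=0$, $R=\F_p 1\oplus J$ with $J$ a square–zero ideal and all products are visibly symmetric; when $\dim J^2=1$, nilpotence forces $J^3=0$, and choosing $x\in J\setminus J^2$ one has $x^2\neq 0$ (else $J^2=J\cdot J=0$), so $x^2$ spans $J^2$ and $R=\F_p[x]$ with $x^3=0$. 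In either subcase $R$ is commutative, contrary to hypothesis.

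So $\dim J=1$, whence $J^2=0$ and $R/J$ is a $2$–dimensional commutative semisimple $\F_p$–algebra, i.e.\ $R/J\cong\F_{p^2}$ or $R/J\cong\F_p\times\F_p$. The first is impossible: since $J^2=0$, $J$ would be a nonzero module over the field $\F_{p^2}$, forcing $2\mid\dim_{\F_p}J$. In the second case I would lift the two orthogonal idempotents of $R/J$ to orthogonal idempotents $e_1,e_2\in R$ with $e_1+e_2=1$ (idempotents lift modulo the nilpotent ideal $J$) and use the Peirce decomposition $R=\bigoplus_{i,j}e_iRe_j$. Each $e_iRe_i$ is local with residue field $\F_p$, hence of dimension $1$ or $2$; a dimension count shows that either one $e_iRe_i$ is two–dimensional and the off–diagonal pieces vanish, giving $R\cong(\Fx{p})\times\F_p$ (commutative, excluded), or else $e_1Re_1=\F_p e_1$, $e_2Re_2=\F_p e_2$ and exactly one of $e_1Re_2,e_2Re_1$ is one–dimensional, say $e_1Re_2=\F_p d$ with $d=e_1de_2$. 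Then $e_1d=de_2=d$ and $de_1=e_2d=d^2=0$, together with $e_i^2=e_i$ and $e_1e_2=e_2e_1=0$; these are exactly the multiplication rules of $\Rav{p}$ under $e_1\mapsto E_{11}$, $e_2\mapsto E_{22}$, $d\mapsto E_{12}$, and the alternative $e_2Re_1\neq 0$ yields $\Rav{p}^{\mathrm{op}}\cong\Rav{p}$ (swap $e_1$ and $e_2$). Hence $R\cong\Rav{p}$.

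The part I expect to be the main obstacle is organizing the classification of the three–dimensional $\F_p$–algebras so that no type is overlooked, and in particular checking carefully, via the lifted idempotents and the Peirce decomposition, that the lone surviving noncommutative algebra is exactly the one presented by the listed relations; by comparison, the additive–group reductions and the radical dimension count are routine.
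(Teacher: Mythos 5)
Your argument is correct. Note, however, that the paper does not prove this proposition at all: it is stated as a citation to Eldridge's 1968 classification of orders of finite noncommutative rings with unity, so there is no internal proof to compare against. What you have supplied is a complete, self-contained, elementary derivation: the reduction on the additive group (cyclic or rank-two cases force commutativity, leaving only $\Z_p^3$, i.e.\ a three-dimensional unital $\F_p$-algebra), the observation that $R/J$ must be a product of fields because a genuine matrix block costs dimension at least $4$, the elimination of $\dim J=2$ (both the $J^2=0$ and $J^2\neq 0$ subcases land in commutative algebras), the parity obstruction ruling out $R/J\cong\F_{p^2}$ when $\dim J=1$, and the Peirce-decomposition endgame identifying the lone noncommutative survivor with $\Rav{p}$ (the opposite case being absorbed via $\Rav{p}^{\mathrm{op}}\cong\Rav{p}$). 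All of these steps check out, including the small but necessary details: $x^2$ spanning $J^2$ when $\dim J^2=1$, idempotents lifting modulo the nilpotent radical, and the dimension count forcing exactly one one-dimensional Peirce corner inside $J$. What your route buys is independence from the cited literature at the cost of length; what the paper's citation buys is brevity and an appeal to a standard classification. Your closing worry about overlooking a type is reasonable but unfounded here: the case division by $\dim J$ and then by the structure of $R/J$ is exhaustive.
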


\begin{proposition}[\cite{derr_1994} or Corollary 1.10 in \cite{corbas_2000}]
\label{prop_nc16}
Let $p$ be prime. The noncommutative nonlocal indecomposable rings of order $p^4$ are
\begin{itemize}
\begin{multicols}{2}
\item $\Rfv{p}$
\item $\Rmv{p}$
\item $\Rgv{p}$
\item $\Rev{p}$
\item $\Rlv{p}$
\item $\Rnv{p}$
\end{multicols}
\item $\Rhv{p}$
\end{itemize}
\end{proposition}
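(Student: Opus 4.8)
This is a classification statement, so the plan is to reduce it, via the decomposition of a finite ring recalled just before Lemma~\ref{lemma_atame}, to a short case analysis. Write $R=S\oplus M$ with $S=\bigoplus_{i=1}^mR_i$, each $R_i=e_iRe_i$ primary, and $M=\bigoplus_{i\neq j}M_{ij}$. Since $R$ is indecomposable, either $m=1$, so $R=S$ is primary, or $m\geq 2$ and $M\neq 0$ — for if $M=0$ then $R=\bigoplus_iR_i$ is a nontrivial ring direct sum. I would also record at the outset that whenever $m\geq 2$ and some $M_{ij}\neq 0$, picking $0\neq d\in M_{ij}$ gives $e_id=d\neq 0=de_i$, so $R$ is automatically noncommutative; thus ``noncommutative'' is free once $R$ is nonlocal and indecomposable, and I only need to list the nonlocal indecomposable rings of order $p^4$.

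If $m=1$ then $R=M_n(T)$ for a local ring $T$, and ``nonlocal'' forces $n\geq 2$; then $|R|=|T|^{n^2}=p^4$ forces $n=2$ and $T=\F_p$, giving $R\cong\Rnv{p}$. If $m\geq 2$, each $|R_i|\geq p$ and $|M|\geq p$; moreover indecomposability makes the ``linking graph'' on $\{1,\dots,m\}$ (an edge between $i$ and $j$ when $M_{ij}\neq 0$ or $M_{ji}\neq 0$) connected, so it has at least $m-1$ edges and $|M|\geq p^{m-1}$. If $m\geq 3$ this gives $\prod_i|R_i|\leq p^{5-m}$ while $\prod_i|R_i|\geq p^m$, forcing $p^m\leq p^{5-m}$, i.e.\ $m\leq 2$, a contradiction; hence $m=2$. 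Next I would show $M^2=0$: if $|M|=p$ only one of $M_{12},M_{21}$ is nonzero and $M^2=0$ is automatic; and if both are nonzero then $|M_{12}|=|M_{21}|=p$, and if $xy\neq 0$ for some $x\in M_{12}$, $y\in M_{21}$ then, rescaling $x$, we may take $xy=e_1$, whereupon $yx$ is an idempotent of $\F_pe_2$, necessarily $e_2$ (else $y=y(xy)=(yx)y=0$), so $e_1,e_2,x,y$ are matrix units spanning a copy of $\Rnv{p}$; since $|R|=p^4$ this forces $R\cong\Rnv{p}$, contradicting $m=2$. Thus $M^2=0$ and multiplication in $R$ is given by~\ref{equ_mult}.

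It remains to enumerate, up to isomorphism, the triples $(R_1,R_2,M)$ with $R_1,R_2$ primary and $|R_1||R_2||M|=p^4$. Since each $|R_i|\leq p^2$ while a primary nonlocal ring has order at least $p^4$, $R_1$ and $R_2$ are local, and elementarily the local rings of order $p$ or $p^2$ are $\F_p$, $\F_{p^2}$, $\Z_{p^2}$ and $\Fx{p}$ (cf.\ Proposition~\ref{prop_eld}); and $\F_{p^2}$ cannot occur, since a nonzero one-sided $\F_{p^2}$-module has order divisible by $p^2$, which would force $M=0$. Over the prime residue fields each one-dimensional Peirce component $M_{ij}$ carries a unique bimodule structure (a nilpotent of $R_i$ acts as $0$ on it, and the residue-field actions are the standard ones). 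Splitting by $\{R_1,R_2\}$: if $R_1\cong R_2\cong\F_p$ and $|M|=p^2$ then $M$ is either a single two-dimensional $M_{ij}$, giving $\Rgv{p}$, or $M_{12}\oplus M_{21}$ with both pieces one-dimensional (and $M^2=0$), giving $\Rhv{p}$; if $\{R_1,R_2\}=\{\F_p,\Fx{p}\}$ with $|M|=p$ one gets $\Rfv{p}$ or $\Rev{p}$ according to which $M_{ij}$ is nonzero, and similarly $\{\F_p,\Z_{p^2}\}$ gives $\Rmv{p}$ or $\Rlv{p}$. Adjoining $\Rnv{p}$ from the $m=1$ case yields exactly the asserted seven rings.

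The step I expect to be the real work is this final bookkeeping: carrying out the $\Rnv{p}$-embedding argument that gives $M^2=0$, systematically ruling out the remaining bimodule and product data, and confirming that the seven rings are pairwise non-isomorphic — in particular separating $\Rev{p}$ from $\Rfv{p}$ and $\Rlv{p}$ from $\Rmv{p}$, i.e.\ showing these rings are not isomorphic to their opposites. This is precisely the content of the classification in \cite{derr_1994} (see also Corollary~1.10 in \cite{corbas_2000}), which is what I would ultimately cite.
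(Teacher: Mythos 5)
The paper offers no proof of this proposition at all: it is imported verbatim from the literature, with \cite{derr_1994} and Corollary 1.10 of \cite{corbas_2000} cited as the source, so there is no internal argument to compare yours against. Your reconstruction is therefore a genuinely different (more self-contained) route, and its outline is correct: reducing via the decomposition $R=S\oplus M$ to either the primary case $m=1$ (which forces $\Rnv{p}$) or to $m=2$ with $M^2=0$, and then enumerating the admissible triples $(R_1,R_2,M)$, is exactly the shape of the classification in the cited sources, and your counting arguments ($m\le 2$ from the connected linking graph, exclusion of $\F_{p^2}$ as a component, uniqueness of the one-dimensional bimodule structures) all check out. Two remarks. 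First, your $M^2=0$ step can be shortened: since $M\subset J(R)$ and $J(R)=J(S)\oplus M$, one has $M_{12}M_{21}\subset R_1\cap J(R)=J(R_1)=0$ because $R_1\cong\F_p$ is a field in that case, so the matrix-unit detour is unnecessary. Second, you correctly identify the residual content you are still citing rather than proving: that the seven rings are pairwise non-isomorphic, in particular that the two triangular rings built from $\Fx{p}$ (and likewise the two built from $\Z_{p^2}$) are not isomorphic to their opposites. As written, your argument shows that every noncommutative nonlocal indecomposable ring of order $p^4$ appears on the list, but not that the list is irredundant; deferring that last check to \cite{derr_1994} is entirely consistent with how the paper itself treats the proposition.
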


\begin{proposition}
\label{prop_2prim}
Let $p$ be prime. There are no non-NI rings of order $p$, $p^2$ or $p^3$. The only non-NI ring(s) of order $p^4$ is $\Rnv{p}$, of order $p^5$ is $\Rnv{p}\oplus\F_p$ and of order $p^6$ are $\Rnv{p}\oplus\F_p\oplus\F_p$, $\Rnv{p}\oplus\Fx{p}$, $\Rnv{p}\oplus\Z_{p^2}$ and $\Rnv{p}\oplus\F_{p^2}$. The minimal non-NI ring is $\Rn$.
\end{proposition}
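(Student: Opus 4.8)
The plan is to reduce everything to Lemma~\ref{lemma_atame}(\ref{lemma_atame_2}) together with a counting argument coming from the decomposition of a finite ring. Recall (as noted in the excerpt) that a finite ring is NI if and only if every indecomposable ring direct summand is NI, so $R$ is non-NI precisely when some indecomposable summand is non-NI. First I would record a size estimate: if $R'$ is an indecomposable non-NI ring of order a power of the prime $p$, then by Lemma~\ref{lemma_atame}(\ref{lemma_atame_2}) some corner $e_iR'e_i$ of its decomposition is non-local, hence equals $M_n(T)$ for some $n\ge 2$ and some local ring $T$ with $|T|=p^j$, $j\ge 1$; since $e_iR'e_i$ is an additive subgroup of $R'$, $|R'|\ge |M_n(T)|=p^{jn^2}\ge p^4$. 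Consequently there is no non-NI ring of order $p$, $p^2$ or $p^3$, and the minimal non-NI ring has order at least $16$. On the other hand, $\Rnv{p}$ is non-local, hence non-NI by Lemma~\ref{lemma_atame}(\ref{lemma_atame_2}), so $\Rnv{p}\oplus C$ is non-NI for any ring $C$; this gives the ``easy'' half of each list, and in particular $\Rn$ is a non-NI ring of order $16$.

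Next I would classify the indecomposable non-NI rings $R$ of order $p^k$ with $k\le 6$. Write $R=S\oplus M$ as in the decomposition with $S=\bigoplus_{i=1}^m R_i$ and $R_i=e_iRe_i$. Each $R_i$ is a nonzero ring, so $|R_i|\ge p$, and any non-local $R_i=M_n(T)$ has $n\ge 2$, hence $|R_i|\ge p^4$. Since $R$ is non-NI, at least one $R_i$ is non-local; since $|S|=\prod_i|R_i|\le p^6$, at most one $R_i$ is non-local (two would give $|S|\ge p^8$); and for that one, $p^{jn^2}\le p^6$ forces $n=2$, $j=1$, so it equals $\Rnv{p}$, of order $p^4$ (the only ring of order $p$ being $\F_p$). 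Renumber so $R_1=\Rnv{p}$. Then $|S|=p^4\prod_{i\ge 2}|R_i|\ge p^{m+3}\le p^6$ gives $m\le 3$. If $m=1$ there are no off-diagonal blocks, so $M=0$ and $R=R_1=\Rnv{p}$, of order $p^4$: this settles $k=4$, and is a contradiction for $k\ge 5$. If $m=3$ then $|S|\ge p^6$, forcing $k=6$, $|S|=p^6$ and hence $M=0$, so $R=R_1\oplus R_2\oplus R_3$ is decomposable, a contradiction. If $m=2$, indecomposability forces $M=M_{12}\oplus M_{21}\ne 0$; but $M_{12}=e_1Re_2$ is a unital left $\Rnv{p}$-module and $M_{21}=e_2Re_1$ a unital right $\Rnv{p}$-module, so (as $\Rnv{p}$ is semisimple with unique simple module of order $p^2$) each has order a power of $p^2$, and whichever is nonzero has order $\ge p^2$, giving $|R|=|S|\cdot|M|\ge p^4\cdot p\cdot p^2=p^7$, a contradiction. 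Hence the only indecomposable non-NI ring of order at most $p^6$ is $\Rnv{p}$, and it has order exactly $p^4$.

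Finally I would assemble the statement. Let $R$ be non-NI of order $p^k$ with $4\le k\le 6$. It has exactly one non-NI indecomposable direct summand $R'$ (two would force order $\ge p^8$), and $|R'|\ge p^4$; by the previous paragraph $|R'|$ is neither $p^5$ nor $p^6$, so $|R'|=p^4$ and $R'=\Rnv{p}$. Thus $R=\Rnv{p}\oplus C$ with $|C|=p^{k-4}$, and $C$ is automatically NI. For $k=4$, $C=0$ and $R=\Rnv{p}$. For $k=5$, $C$ is the unique ring of order $p$, namely $\F_p$, so $R=\Rnv{p}\oplus\F_p$. For $k=6$, $C$ is one of the four unital rings of order $p^2$, namely $\F_{p^2}$, $\Z_{p^2}$, $\Fx{p}$ or $\F_p\oplus\F_p$, giving the four listed rings; these are all NI by Lemma~\ref{lemma_atame}(\ref{lemma_atame_2}). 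Taking $p=2$ in the order-$p^4$ case shows $\Rn$ is the unique non-NI ring of order $16$, hence the minimal non-NI ring. The substantive step is the exclusion of indecomposable non-NI rings of orders $p^5$ and $p^6$ carried out in the second paragraph; the remaining inputs are the elementary classifications of unital rings of orders $p$ and $p^2$ (alternatively one may quote Propositions~\ref{prop_eld} and \ref{prop_nc16} for the orders up to $p^4$).
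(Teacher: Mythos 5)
Your proof is correct, and it takes a noticeably more self-contained route than the paper's. The paper establishes the orders up to $p^4$ by citing the external classifications (Proposition~\ref{prop_eld} for orders $\le p^3$ and Proposition~\ref{prop_nc16} for order $p^4$) and then disposes of the decomposable cases and the single indecomposable candidate of order $p^6$ by hand; in particular it rules out the indecomposable order-$p^6$ case with exactly the observation you use, that the minimal (unital) modules over $\Rnv{p}$ have order $p^2$. You instead derive everything directly from the decomposition $R=S\oplus M$ and Lemma~\ref{lemma_atame}(\ref{lemma_atame_2}): a non-local corner is $M_n(T)$ of order $p^{jn^2}\ge p^4$, which immediately kills orders $\le p^3$, pins the unique non-local corner down to $\Rnv{p}$ when $|R|\le p^6$, and then the case analysis on the number $m$ of corners (with the unital-module order count for $M_{12}$, $M_{21}$ in the $m=2$ case) eliminates all indecomposable candidates of orders $p^5$ and $p^6$. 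This buys two things: you avoid invoking the classification results except for the elementary lists of rings of orders $p$ and $p^2$, and you explicitly close the indecomposable order-$p^5$ case, which the paper's proof passes over in silence (it asserts the $p^5$ list citing only the order-$p^4$ classification). The paper's route is shorter where the classifications are already available; yours is the one you would want if you had to verify the claim from first principles.
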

\begin{proof}
Since commutative rings are NI, by Lemma \ref{lemma_atame}(\ref{lemma_atame_2}) and Proposition \ref{prop_eld}, all rings of order $p$, $p^2$ or $p^3$ are NI. By Lemma \ref{lemma_atame}(\ref{lemma_atame_2}) and Proposition \ref{prop_nc16}, the only ring of order $p^4$ that is not NI is $\Rnv{p}$ and of order $p^5$ is $\Rnv{p}\oplus\F_2$. So, the minimal non-NI ring is $\Rn$. Knowing that $\F_2\oplus\F_2$, $\Fx{2}$, $\Z_4$ and $\F_{p^2}$ are the rings of order 4, by Lemma \ref{lemma_atame}(\ref{lemma_atame_2}), the rings $\Rnv{p}\oplus\F_2\oplus\F_2$, $\Rnv{p}\oplus\Fx{2}$, $\Rnv{p}\oplus\Z_4$ and $\Rnv{p}\oplus\F_{p^2}$ are the only decomposable non-NI rings of order $p^6$. The only possible indecomposable non-NI ring of order $p^6$ would be a ring with components $R_1=\Rnv{p}$ and $R_2=\F_p$ and $|M|=p$ in its ring decomposition. But, the minimal submodules of $\Rnv{p}$ are isomorphic to $\F_p^2$ so no such ring exists.
\end{proof}

\begin{remark}
\label{rem_l16}
The set of noncommutative local rings of order 16 can be deduced from work in \cite{corbas_2000} or \cite{derr_1994}. It turns out that they are precisely the minimal noncommutative semicommutative rings which is the subject of Proposition \ref{prop_semic}. Note $\sigma$ denotes the Frobenius automorphism on $\F_4$.
\end{remark}

\begin{proposition}[\cite{xu_1998} Theorem 8]
\label{prop_semic}
A minimal noncommutative semicommutative ring has order 16. The complete list of such rings is
\begin{itemize}
\item $\Rd$
\item $\Rb$
\item $\Rj$
\item $\Rc$
\item $\Rk$.
\end{itemize}
\end{proposition}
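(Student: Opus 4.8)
The plan is to prove the statement in three stages: pin down the minimal possible order, determine the structure a minimal example must have, and then verify the five candidates.

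First I would settle the order. Since a finite ring is commutative (resp.\ semicommutative) exactly when each of its ring direct summands is, and since every finite ring is a direct sum of rings of prime power order, a minimal noncommutative semicommutative ring has prime power order $p^k$. By Proposition \ref{prop_eld} there are no noncommutative rings of order $p$ or $p^2$, so $k\geq 3$; moreover the only noncommutative ring of order $p^3$ is $\Rav{p}$, the smallest of which is $\Ra$, of order $8$. But $\Ra$ is not abelian (it has a non-central idempotent), hence not semicommutative by Lemma \ref{lemma1}. Therefore no noncommutative semicommutative ring has order less than $16$, and the problem reduces to describing those of order exactly $16$.

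Next I would cut down the structure. Let $R$ be a noncommutative semicommutative ring of order $16$. By Lemma \ref{lemma1} it is abelian, so by Lemma \ref{lemma_atame}(\ref{lemma_atame_1}) the module part $M$ of its decomposition vanishes and $R=\bigoplus_i R_i$ with each $R_i$ local. As $R$ is noncommutative, some $R_i$ is noncommutative, and $|R_i|$ divides $16$; but the only noncommutative ring of order at most $8$ is $\Ra$, which is \emph{not} local, so $|R_i|=16$ and $R=R_i$ is a noncommutative local ring of order $16$. I would then invoke the classification recorded in Remark \ref{rem_l16}, extracted from \cite{corbas_2000} or \cite{derr_1994}: the noncommutative local rings of order $16$ are precisely $\Rd$, $\Rb$, $\Rj$, $\Rc$ and $\Rk$.

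Finally I would check that each of these five is in fact semicommutative, so that the list of minimal noncommutative semicommutative rings is exactly this list. For $\Rd$ one has $J(R)^2=0$, and in any local ring with $J(R)^2=0$ a relation $ab=0$ forces $a,b\in J(R)$ (neither can be a unit), so $aRb\subseteq J(R)^2=0$. For each of $\Rb$, $\Rj$, $\Rc$, $\Rk$ the residue field is the prime field $\F_2$, and a short computation of the radical shows $J(R)^3=0$, whence Lemma \ref{lemma_j3} gives semicommutativity immediately. I expect the real obstacle to be none of this bookkeeping but the enumeration itself, namely producing the complete list of noncommutative local rings of order $16$; that step leans on the finite-ring classifications of \cite{corbas_2000} and \cite{derr_1994}, while everything else follows routinely from Lemmas \ref{lemma1}, \ref{lemma_atame}, \ref{lemma_j3} and Proposition \ref{prop_eld}.
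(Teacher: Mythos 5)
Your argument is correct, but it is worth noting that the paper does not prove this statement at all: it is quoted as Theorem~8 of \cite{xu_1998}, so there is no internal proof to match. What you have written is a legitimate self-contained derivation built from the paper's own toolkit: Proposition~\ref{prop_eld} and the direct-sum reduction pin the minimal order at $16$ (correctly noting that $\Ra$ fails to be abelian, hence fails semicommutativity by Lemma~\ref{lemma1}); Lemma~\ref{lemma1} together with Lemma~\ref{lemma_atame}(\ref{lemma_atame_1}) forces a minimal example to be local noncommutative of order $16$; and the verification step is sound, including your correct observation that $\Rd$ needs the separate $J(R)^2=0$ argument since its residue field $\F_4$ is not prime and Lemma~\ref{lemma_j3} does not apply to it, while $\Rb$, $\Rj$, $\Rc$, $\Rk$ all have residue field $\F_2$ and $J(R)^3=0$. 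The one point to be careful about is the enumeration step: Remark~\ref{rem_l16} asserts that the noncommutative local rings of order $16$ coincide with the list in this very proposition, so citing the remark verbatim would be circular; you avoid this by grounding the enumeration in \cite{corbas_2000} and \cite{derr_1994} directly, which is exactly where the genuine content lies. In short, your route replaces an external citation with an argument that is routine except for that borrowed classification, which is a reasonable trade and consistent with how the paper treats the analogous order-$16$ facts elsewhere.
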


Two other results on rings of order 16 which are necessary for our classification are provided here.

\begin{proposition}[\cite{xue_1992} Theorem 3]
\label{prop_duo}
A minimal noncommutative duo ring has order 16. The complete list of such rings is
\begin{itemize}
\item $\Rd$
\item $\Rb$
\item $\Rj$.
\end{itemize}
\end{proposition}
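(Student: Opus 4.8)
The plan is to reduce the statement to a finite check on rings of order $16$, using the classifications of small finite rings (Propositions~\ref{prop_eld}, \ref{prop_nc16} and \ref{prop_semic}) together with the fact that duo rings are abelian. Two preliminary remarks set this up. By Lemma~\ref{lemma_duo} there is no ambiguity in the phrase ``finite duo ring''. Also, a direct summand of a duo ring is duo: if $A$ is a ring summand of a duo ring $R$ and $I$ is a right ideal of $A$, then $I\times 0$ is a right ideal of $R$, hence two-sided, hence $I$ is two-sided in $A$. Thus a decomposable noncommutative duo ring has a noncommutative duo summand of strictly smaller order, so a \emph{minimal} noncommutative duo ring is indecomposable, and more generally every noncommutative duo ring of least possible order is indecomposable. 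Finally, duo implies semicommutative (Figure~\ref{fig_1}), which by Lemma~\ref{lemma1} implies abelian; this is the observation that does most of the work.

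First I would clear the orders below $16$. A finite ring is a direct sum of rings of prime power order, and by Proposition~\ref{prop_eld} every ring of order $p$ or $p^2$ is commutative while the only noncommutative ring of order $p^3$ is $\Rav{p}$; it follows that the only noncommutative ring of order less than $16$ is $\Ra$, of order $8$. But $e_{11}$ is a noncentral idempotent of $\Ra$, so $\Ra$ is not abelian and hence not duo. Therefore there is no noncommutative duo ring of order smaller than $16$.

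Next I would run through all noncommutative rings of order $16=2^4$. Decomposable ones are impossible by the reduction above, as they would produce a noncommutative duo summand of order at most $8$. The indecomposable nonlocal ones are the $p=2$ instances of Proposition~\ref{prop_nc16}, namely $\Rf$, $\Rm$, $\Rg$, $\Re$, $\Rl$, $\Rn$ and $\Rh$; here $\Rn=\Rnv{2}$ is not abelian because $e_{11}$ is noncentral, while each of the remaining six has a nonzero bimodule part $M$ in its decomposition and so fails to be abelian by Lemma~\ref{lemma_atame}(\ref{lemma_atame_1}); hence none of the seven is duo. By Remark~\ref{rem_l16} and Proposition~\ref{prop_semic}, the indecomposable local noncommutative rings of order $16$ are exactly $\Rd$, $\Rb$, $\Rj$, $\Rc$ and $\Rk$, and these I would treat one at a time. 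As $\Rd$ is a noncommutative chain ring, all of its one-sided ideals are powers of $J(\Rd)$ and therefore two-sided, so $\Rd$ is duo. In $\Rc$ the line $\F_2 v$ is a right ideal, since $vu=v^2=0$ and $v\cdot u^2=0$, but it is not a left ideal because $uv=u^2\neq 0$; hence $\Rc$ is not duo, and the same computation in $\Rk$ (there $uv=2\neq 0$) shows $\Rk$ is not duo. For $\Rb$ and $\Rj$ one checks by hand that every right ideal is two-sided: writing $J$ for the Jacobson radical, $J^2$ is one-dimensional and is killed on both sides by $J$, so the only proper nonzero one-sided ideals are $J$, $J^2$ and the three two-dimensional subspaces between them, and each of these turns out to be a two-sided ideal; hence $\Rb$ and $\Rj$ are duo.

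Putting the pieces together, the noncommutative duo rings of order $16$ are exactly $\Rd$, $\Rb$ and $\Rj$, and no noncommutative duo ring has smaller order, which is the proposition. The main obstacle is this last case analysis, and in particular $\Rb$ and $\Rj$: there is no structural shortcut of the kind available for the chain ring $\Rd$, so one has to enumerate all one-sided ideals and verify two-sidedness directly, and in the mixed-characteristic rings $\Rj$ and $\Rk$ the relation $2=uv$ entangles the additive and multiplicative structure, so the bookkeeping must be done with care.
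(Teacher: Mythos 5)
The paper gives no proof of this proposition at all: it is imported verbatim as Theorem~3 of \cite{xue_1992}, so there is nothing internal to compare your argument against. That said, your proof is correct and is a genuinely self-contained alternative built from the paper's own toolkit. The skeleton is sound: duo $\Rightarrow$ semicommutative $\Rightarrow$ abelian kills $\Ra$ (the only noncommutative ring of order $<16$), kills all seven indecomposable nonlocal rings of order $16$ from Proposition~\ref{prop_nc16} (six via $M\neq 0$ and Lemma~\ref{lemma_atame}(\ref{lemma_atame_1}), and $\Rn$ via the noncentral $e_{11}$), and the summand argument rules out decomposables; what remains is the five local rings of Remark~\ref{rem_l16}, which you settle by direct computation. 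Your verifications there check out: $\F_2v$ is indeed a right but not left ideal in $\Rc$ and $\Rk$, and the chain-ring argument disposes of $\Rd$. The only place you are terser than you should be is the claim that $J$, $J^2$ and the three intermediate subspaces are the \emph{only} proper nonzero one-sided ideals of $\Rb$ and $\Rj$; this needs the observation that any one-sided ideal containing an element $\alpha u+\beta v+\gamma u^2$ with $(\alpha,\beta)\neq(0,0)$ already contains $u^2$ (right-multiply by $u$ or $v$), after which every such ideal contains $J^2$ and the enumeration, and two-sidedness, follow. With that one sentence added the argument is complete. What your route buys is independence from \cite{xue_1992}: the proposition becomes a corollary of the order-$16$ classification already assembled in the paper, at the cost of a hands-on ideal enumeration that Xue's original treatment organizes differently.
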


\begin{proposition}[\cite{kim_2011} Theorem 5]
\label{prop_refl}
A minimal noncommutative reflexive ring has order 16. The complete list of such rings is
\begin{itemize}
\item $\Rd$
\item $\Rn$.
\end{itemize}
\end{proposition}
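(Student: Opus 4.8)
The plan is to lean on the classifications already quoted. Reflexivity holds for a finite ring exactly when it holds for every ring direct summand, and every finite ring is a direct sum of rings of prime power order; hence a minimal noncommutative reflexive ring is indecomposable of prime power order $p^n$ (if it were $A\oplus B$ with both parts nontrivial and, say, $A$ noncommutative, then $A$ would be a strictly smaller noncommutative reflexive ring). By Proposition \ref{prop_eld} there are no noncommutative rings of order $p$ or $p^2$ and the unique noncommutative ring of order $p^3$ is $\Rav{p}$; but in the decomposition of $\Rav{p}$ one has $R_1\cong R_2\cong\F_p$, $M_{12}\cong\F_p$ and $M_{21}=0$, so $M\neq 0$ and $M^2=0$, whence $\Rav{p}$ is nonreflexive by Lemma \ref{lemma_atame}(\ref{lemma_atame_5}). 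Every order at most $15$ that is not a prime power factors into rings of prime-power order less than $8$, all commutative by Proposition \ref{prop_eld}, so the only noncommutative ring of order at most $15$ is $\Ra$, which we have just seen is nonreflexive. Since (by the argument below) $\Rd$ is noncommutative, reflexive and of order $16$, the minimal order is exactly $16$, and it remains to test the indecomposable noncommutative rings of order $16$.

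For the nonlocal ones, Proposition \ref{prop_nc16} gives $\Rfv{2}$, $\Rmv{2}$, $\Rgv{2}$, $\Rev{2}$, $\Rlv{2}$, $\Rnv{2}$, $\Rhv{2}$. In each of the first six the decomposition has local corner rings $R_1,R_2$ together with a nonzero bimodule $M$ satisfying $M^2=0$: for $\Rfv{2}$, $\Rmv{2}$, $\Rgv{2}$ one has $M_{21}=0$, for $\Rev{2}$, $\Rlv{2}$ one has $M_{12}=0$, and for $\Rhv{2}$ a brief matrix computation gives $M_{12}M_{21}=M_{21}M_{12}=0$; so Lemma \ref{lemma_atame}(\ref{lemma_atame_5}) eliminates all six. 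The remaining ring $\Rnv{2}=\Rn$ is simple, hence prime, hence reflexive.

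For the local ones, Remark \ref{rem_l16} identifies the noncommutative local rings of order $16$ with the five semicommutative rings of Proposition \ref{prop_semic}: $\Rd$, $\Rb$, $\Rj$, $\Rc$, $\Rk$. Writing $R$ for whichever of $\Rb$, $\Rj$, $\Rc$, $\Rk$ is under consideration, its relations give $vu=0$ while $uv\neq 0$, so semicommutativity forces $vRu=0$, whereas $uv\in uRv$ shows $uRv\neq 0$; thus $R$ is not reflexive (equivalently, $R$ is semicommutative but not reversible, see Lemma \ref{lemma2}). Finally, set $R=\Rd$: it is local with $J(R)^2=0$, so if $a,b\in R$ are nonzero with $aRb=0$ then neither is a unit, hence $a,b\in J(R)$ and $bRa\subseteq J(R)\cdot R\cdot J(R)=J(R)^2=0$; therefore $\Rd$ is reflexive. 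Collecting the two cases, exactly $\Rd$ and $\Rn$ among the noncommutative rings of order $16$ are reflexive, which is the assertion.

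The step I expect to be most delicate is keeping the enumeration airtight rather than any single computation: one must rely on Remark \ref{rem_l16} for the local noncommutative rings of order $16$ and on Proposition \ref{prop_nc16} for the nonlocal ones, and one must actually carry out the elementary (but not one-line) verification that $M^2=0$ for $\Rhv{2}$, whose block structure is less transparent than that of the other six nonlocal examples.
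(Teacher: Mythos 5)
Your argument is correct, but it is not the paper's argument: the paper offers no proof of Proposition \ref{prop_refl} at all, quoting it as Theorem 5 of \cite{kim_2011}. What you have done is reconstruct that classification from the other ingredients the paper assembles: Proposition \ref{prop_eld} plus the direct-summand reduction to dispose of orders below $16$ (with $U_2(\F_p)$ killed by Lemma \ref{lemma_atame}(\ref{lemma_atame_5})), Proposition \ref{prop_nc16} together with Lemma \ref{lemma_atame}(\ref{lemma_atame_5}) to eliminate the six nonlocal indecomposables of order $16$ other than $M_2(\F_2)$, Remark \ref{rem_l16} and the relation $vu=0\neq uv$ combined with semicommutativity to eliminate the four local rings other than $\Rd$, and direct arguments (simplicity, hence primeness, of $M_2(\F_2)$; $J^2=0$ in the local ring $\Rd$) to confirm the two survivors. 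Each step checks out; in particular your verification that $M_{12}M_{21}=M_{21}M_{12}=0$ in the $4\times 4$ matrix ring of Proposition \ref{prop_nc16} is correct, and the $J^2=0$ argument is a clean way to get reflexivity of $\Rd$. The trade-off is that your route is only as self-contained as Remark \ref{rem_l16} and Proposition \ref{prop_nc16}, i.e., it still rests on the order-$p^4$ classifications of \cite{corbas_2000} and \cite{derr_1994}, whereas the citation to \cite{kim_2011} short-circuits all of this; on the other hand, your version makes visible exactly which structural facts (chiefly Lemma \ref{lemma_atame}(\ref{lemma_atame_5})) do the work. (One cosmetic slip: the six rings you handle via that lemma are not literally the ``first six'' in the order Proposition \ref{prop_nc16} lists them, since $M_2(\F_2)$ appears sixth there; your subsequent enumeration makes the intended six clear.)
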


\section{Minimal Rings}
\label{sect_main}
In this section the minimal rings of the various types according to the diagram in Figure \ref{fig_1} are identified. This will be done first for nonreflexive rings and then for reflexive rings.

\subsection{Minimal Nonreflexive Rings}

In the context of finite nonreflexive rings, the ring class inclusions are
\begin{center}
~\xymatrixrowsep{1pc}\xymatrixcolsep{1pc}\xymatrix{
\textrm{duo}\ar@{=>}[r]&\textrm{semicommutative}\ar@{=>}[r]&\textrm{abelian}\ar@{=>}[r]&\textrm{NI}
}
\end{center}

\begin{theorem}[Minimal Nonreflexive Duo]
A minimal nonreflexive duo ring has order 16. The complete list of such rings is
\begin{itemize}
\item $\Rb$
\item $\Rj$.
\end{itemize}
\end{theorem}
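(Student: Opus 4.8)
The plan is to assemble the statement from the three classification results already at our disposal --- the minimal noncommutative duo rings (Proposition \ref{prop_duo}), the minimal noncommutative reflexive rings (Proposition \ref{prop_refl}), and the minimal noncommutative semicommutative rings (Proposition \ref{prop_semic}) --- together with Lemma \ref{lemma2}. So the theorem should require essentially no new construction, only a sorting of the three known order-$16$ duo rings by reflexivity.

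First I would record the elementary observation that every commutative ring is reflexive: if $aRb=0$ then $bRa=aRb=0$ by commutativity. Hence a nonreflexive ring is noncommutative, and a nonreflexive duo ring is in particular a noncommutative duo ring. By Proposition \ref{prop_duo} there is no noncommutative duo ring of order less than $16$, so \emph{a fortiori} no nonreflexive duo ring of order less than $16$; and since $16$ is that minimal order, every noncommutative duo ring of order $16$ is automatically minimal, so the complete list of noncommutative duo rings of order $16$ is $\Rd$, $\Rb$, $\Rj$. It therefore suffices to decide which of these three rings are reflexive.

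The ring $\Rd$ is reflexive, since it appears in the complete list of minimal noncommutative reflexive rings in Proposition \ref{prop_refl}; so it is not a nonreflexive duo ring. For $\Rb$ and $\Rj$ I would argue through Lemma \ref{lemma2}: both are semicommutative by Proposition \ref{prop_semic}, and a semicommutative ring is reflexive if and only if it is reversible, so it is enough to see that neither is reversible. In each case $vu=0$ while $uv\neq0$ --- in $\Rb$ the element $uv=u^2=v^2$ is the nonzero socle element of this local ring, and in $\Rj$ one has $uv=2\neq0$ --- so neither $\Rb$ nor $\Rj$ is reversible, hence neither is reflexive. (Equivalently, one may check directly that $vRu=0$ while $uv\in uRv\setminus 0$, using $vu=0$ together with the vanishing of the cube of the Jacobson radical to kill every product $vru$.)

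Putting this together: a nonreflexive duo ring has order at least $16$, rings of order $16$ with these properties exist (namely $\Rb$ and $\Rj$), so the minimal order is exactly $16$; and of the three noncommutative duo rings of order $16$, $\Rd$ is reflexive while $\Rb$ and $\Rj$ are not, giving the stated complete list. Since duo rings are NI (duo $\Rightarrow$ semicommutative $\Rightarrow$ abelian $\Rightarrow$ NI, by Lemmas \ref{lemma1} and \ref{lemma_atame}(\ref{lemma_atame_4})), only indecomposable rings need to be inspected, so no decomposable examples are missed. I do not anticipate a genuine obstacle here; the only care required is in correctly invoking the completeness of the three cited classifications and in the short non-reversibility computations in $\Rb$ and $\Rj$.
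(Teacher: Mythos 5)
Your proposal is correct and follows essentially the same route as the paper, whose entire proof is ``Follows from Propositions \ref{prop_duo} and \ref{prop_refl}'': take the three order-$16$ noncommutative duo rings from Proposition \ref{prop_duo} and discard the one ($\Rd$) appearing in the complete list of Proposition \ref{prop_refl}. Your additional direct verification that $\Rb$ and $\Rj$ are nonreflexive (via Lemma \ref{lemma2} and the failure of reversibility from $vu=0\neq uv$) is sound but not needed, since the completeness of the list in Proposition \ref{prop_refl} already forces every noncommutative order-$16$ ring other than $\Rd$ and $\Rn$ to be nonreflexive.
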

\begin{proof}
Follows from Propositions \ref{prop_duo} and \ref{prop_refl}.
\end{proof}

\begin{theorem}[Minimal Nonreflexive Semicommutative Nonduo]
A minimal nonreflexive semicommutative nonduo ring has order 16. The complete list of such rings is
\begin{itemize}
\item $\Rc$
\item $\Rk$.
\end{itemize}
\end{theorem}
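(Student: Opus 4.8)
The plan is to follow the template of the preceding theorem's proof, assembling the three classification results Propositions \ref{prop_semic}, \ref{prop_duo} and \ref{prop_refl}; the one genuinely new observation needed is that nonreflexivity forces noncommutativity, which is what lets the order bound of Proposition \ref{prop_semic} kick in.

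First I would record that every commutative ring is reflexive: in a commutative ring $aRb=abR=bRa$ as subsets, so $aRb=0$ if and only if $bRa=0$ (equivalently, commutative $\Rightarrow$ symmetric $\Rightarrow$ reversible $\Rightarrow$ reflexive by Lemma \ref{lemma2} and Figure \ref{fig_1}). Hence a nonreflexive semicommutative nonduo ring is in particular a noncommutative semicommutative ring, so by Proposition \ref{prop_semic} it has order at least $16$, and no such ring of smaller order exists. Next I would analyse order $16$. If $R$ is nonreflexive, semicommutative and nonduo with $|R|=16$, then $R$ is noncommutative and semicommutative of the least possible order, so $R$ is one of $\Rd$, $\Rb$, $\Rj$, $\Rc$, $\Rk$ by Proposition \ref{prop_semic}. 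Since $16$ is also the least order of a noncommutative duo ring, any duo member of this list must occur in the list of Proposition \ref{prop_duo}, i.e.\ among $\Rd$, $\Rb$, $\Rj$; thus the nonduo members are exactly $\Rc$ and $\Rk$. Similarly, since $16$ is the least order of a noncommutative reflexive ring, a reflexive member of the list would occur in Proposition \ref{prop_refl}, i.e.\ it would be $\Rd$ or $\Rn$; as neither $\Rc$ nor $\Rk$ is isomorphic to $\Rd$ or $\Rn$, both $\Rc$ and $\Rk$ are nonreflexive. Therefore $\Rc$ and $\Rk$ are nonreflexive semicommutative nonduo rings of order $16$.

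Finally I would combine the two steps: there is no nonreflexive semicommutative nonduo ring of order below $16$, and $\Rc,\Rk$ realise such rings at order $16$, so a minimal one has order $16$; and a ring with these properties and order $16$ is exactly $\Rc$ or $\Rk$. I would also add the short remark that decomposability causes no trouble here: if $R=\bigoplus_i R_i$ is semicommutative and nonduo, each $R_i$ is semicommutative and some $R_i$ is nonduo, hence noncommutative, so $|R_i|\geq 16$ by Proposition \ref{prop_semic}; thus $|R|\geq 16$ and any such $R$ of order $16$ is forced to be indecomposable. I do not expect a substantive obstacle: the argument is bookkeeping across the three cited classifications, and the only point requiring a moment's care is the implication ``commutative $\Rightarrow$ reflexive'', which is what pins the minimal order to that of Proposition \ref{prop_semic}.
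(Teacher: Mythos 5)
Your proposal is correct and follows exactly the route of the paper, whose proof is simply the one line ``Follows from Propositions \ref{prop_semic}, \ref{prop_duo} and \ref{prop_refl}''; you have merely made explicit the bookkeeping the paper leaves implicit (commutative $\Rightarrow$ reflexive, so Proposition \ref{prop_semic} gives the order bound, and the order-16 classifications then pin down $\Rc$ and $\Rk$). No changes needed.
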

\begin{proof}
Follows from Propositions \ref{prop_semic}, \ref{prop_duo} and \ref{prop_refl}.
\end{proof}

From Remark \ref{rem_l16} and Proposition \ref{prop_eld}, any noncommutative abelian ring of order less than 32 is local semicommutative. Therefore, a nonsemicommutative abelian ring is of order at least 32. The next proposition shows that such a ring must actually be at least of order 64.

\begin{proposition}
\label{prop_semi}
A local ring of order less than 64 is semicommutative.
\end{proposition}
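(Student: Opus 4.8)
The plan is to go through the (finitely many) prime-power orders $p^n<64$ that a finite local ring can have -- recall a finite local ring has order $q\cdot|J(R)|$ with $q=|R/J(R)|$ a prime power, so $|R|$ is a prime power. Since a nonzero local ring is indecomposable and commutative rings are trivially semicommutative, it suffices to treat noncommutative local rings. For $n\le 3$ there is nothing to do: by Proposition \ref{prop_eld} there are no noncommutative rings of order $p$ or $p^2$, and the only noncommutative ring of order $p^3$ is $\Rav{p}$, which is not local. For $n=4$ the only order $p^4<64$ is $16$, and by Remark \ref{rem_l16} the noncommutative local rings of order $16$ are exactly the five semicommutative rings $\Rd$, $\Rb$, $\Rj$, $\Rc$, $\Rk$ of Proposition \ref{prop_semic}. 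So the entire content of the proposition lies in order $32$.

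Let $R$ be local of order $32$, write $J=J(R)$, and let $R/J\cong\F_q$. As $J/J^2$ is a nonzero $\F_q$-vector space, $|J/J^2|\ge q$, and a short count (using that $J^2/J^3$ is also an $\F_q$-space when $q=4$) rules out $q\in\{4,8,16\}$; thus either $R=\F_{32}$ (commutative) or $R/J\cong\F_2$ and $|J|=16$. Assume the latter, let $n$ be the nilpotency index of $J$, and put $d_i=\dim_{\F_2}J^i/J^{i+1}$, so $d_i\ge 1$ for $1\le i\le n-1$ and $\sum d_i=4$. If $n\le 3$ then $J^3=0$ and $R$ is semicommutative by Lemma \ref{lemma_j3}. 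If $n=5$ then every $d_i=1$, so $J$ is principal, $R$ is a finite chain ring, its one-sided ideals are exactly the powers of $J$, hence $R$ is duo and so semicommutative. If $n=4$ then $d_1=1$ would again make $R$ a chain ring (forcing $n=5$), so $(d_1,d_2,d_3)=(2,1,1)$, i.e. $|J/J^2|=4$, $|J^2/J^3|=|J^3|=2$ and $J^4=0$. I expect this to be the main obstacle.

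For this last case I would read the multiplication off the associated graded ring. Fix $s\in J^2\setminus J^3$ and $0\ne t\in J^3$, set $V=J/J^2$, and define $\mu\colon V\times V\to\F_2$ by $xy\equiv\mu(\bar x,\bar y)\,s\pmod{J^3}$ and $\lambda,\rho\colon V\to\F_2$ by $xs=\lambda(\bar x)\,t$, $sx=\rho(\bar x)\,t$; these are well defined because $J^4=0$. Expanding $(xy)z=x(yz)$ for $x,y,z\in J$ and using $J^4=0$ gives the identity
\[
\mu(\alpha,\beta)\,\rho(\gamma)=\mu(\beta,\gamma)\,\lambda(\alpha)\qquad(\alpha,\beta,\gamma\in V),
\]
and $J^3\ne 0$ forces $\lambda\ne 0$ and $\rho\ne 0$, since $J^3=JJ^2$ is the span of the products $xs$ and $J^3=J^2J$ the span of the products $sx$. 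Because $R/J\cong\F_2$, every element of $R$ is $m\cdot 1+j$ with $m\in\{0,1\}$, $j\in J$, so $aRb=aJb$ whenever $ab=0$; it therefore suffices to show $aJb=0$ whenever $ab=0$. If $a$ or $b$ is a unit this is immediate; if $a\in J^2$ or $b\in J^2$ then $aJb\subseteq J^4=0$; so assume $a,b\in J\setminus J^2$. A direct computation gives $ajb=\mu(\bar a,\bar j)\,\rho(\bar b)\,t$ for every $j\in J$, hence $aJb=0$ unless $\mu(\bar a,\cdot)\not\equiv 0$ and $\rho(\bar b)=1$. But $ab=0$ gives $\mu(\bar a,\bar b)=0$; if in addition $\mu(\cdot,\bar a)\equiv 0$, the displayed identity together with $\mu(\bar a,\cdot)\not\equiv 0$ forces $\lambda\equiv 0$, contradicting $\lambda\ne 0$, so there is $\gamma_0\in V$ with $\mu(\gamma_0,\bar a)=1$, and then $\mu(\gamma_0,\bar a)\,\rho(\bar b)=\mu(\bar a,\bar b)\,\lambda(\gamma_0)=0$ forces $\rho(\bar b)=0$. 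Thus $aJb=0$ in every case, so $R$ is semicommutative, which finishes the proposition.

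The only genuinely delicate point is thus the order-$32$ case with $d_1=2$; once the associativity identity above and the non-vanishing of $\lambda$ and $\rho$ are in place, it unwinds. The other places calling for care are the size count that pins the residue field down to $\F_2$ at order $32$ and the standard structure theory of finite chain rings used when $n=5$.
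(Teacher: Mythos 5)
Your proof is correct, but it takes a genuinely different route from the paper's. The paper leans on the explicit classification of local rings of order 32 in \cite{corbas_2000_2}: after the same reduction of orders below 32 via Proposition \ref{prop_eld} and Remark \ref{rem_l16}, it uses Lemma \ref{lemma_j3} to discard all order-32 local rings with $J^3=0$ and then verifies semicommutativity by a direct generator computation in the seven remaining rings (all of which have residue field $\F_2$ and radical filtration $(2,1,1)$ --- exactly your hard case). You instead avoid the classification entirely: you pin the residue field down to $\F_2$ by counting, dispose of the chain-ring case ($d_1=1$) by the standard fact that a finite chain ring is duo, and handle the remaining shape $(d_1,d_2,d_3)=(2,1,1)$ by passing to the associated graded data $\mu,\lambda,\rho$ and exploiting the associativity identity $\mu(\alpha,\beta)\rho(\gamma)=\mu(\beta,\gamma)\lambda(\alpha)$ together with $\lambda\neq 0\neq\rho$ (forced by $J^3\neq 0$). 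Your argument is self-contained and structural, and in principle generalizes beyond order 32, whereas the paper's is shorter once one grants the external classification; your explicit elimination of residue fields larger than $\F_2$ is also a point the paper only handles implicitly through that classification. I checked the key steps --- well-definedness of $\mu,\lambda,\rho$, the reduction of $aRb$ to $aJb$, the formula $ajb=\mu(\bar a,\bar j)\rho(\bar b)t$, and the final contradiction from $\mu(\bar a,\bar b)=0$ --- and they all hold.
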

\begin{proof}
Commutative rings are semicommutative. Let $p$ be a prime. By Proposition \ref{prop_eld}, there are no noncommutative local rings of order $p$, $p^2$ or $p^3$. By Remark \ref{rem_l16}, there are no nonsemicommutative local rings of order $p^4$. So, a local ring of order less than 32 is semicommutative. For rings of order 32, by Lemma \ref{lemma_j3}, only the local noncommutative rings where the cube of its jacobson radical is nonzero need to be checked for semicommutativity. Such rings can be found in the classification of rings of order 32 in \cite{corbas_2000_2} which are given in cases 1.2.1, 2.2.b and 2.2.c. There are 7 such rings which are listed here.
\begin{enumerate}
\item $\frac{\F_2\id{u,v}}{\id{u^4,uv,vu-u^3,v^2}}$
\item $\frac{\F_2\id{u,v}}{\id{u^4,uv,vu-u^3,v^2-u^3}}$
\item $\frac{\Z_4\id{u,v}}{\id{u^4,uv,vu-u^3,v^2,u^3-2}}$
\item $\frac{\Z_4\id{u,v}}{\id{u^4,uv,vu-u^3,v^2-u^3,u^3-2}}$
\item $\frac{\Z_4\id{u,v}}{\id{u^4,uv,vu-u^3,v^2,u^2-2}}$
\item $\frac{\Z_4\id{u,v}}{\id{u^4,uv,vu-u^3,v^2-u^3,u^2-2}}$
\item $\frac{\Z_4\id{u,v}}{\id{u^4,uv,vu-u^3,v^2,u^2-2-2u}}$.
\end{enumerate}
Let $R$ be any of the rings above. Then $R$ is local, $J(R)=\F_2u+\F_2v+J^2$, $J^2=\F_2u^2+J^3$, $J^3=Fu^3$ and $J^4=0$. In a finite local ring, the jacobson radical is the set of zero divisors. Let $a,b\in J(R)$ and $c\in R$. Then for some $\alpha_i,\beta_i,\gamma_i,\delta_i,r\in\F_2$, $a=\alpha_1u+\beta_1v+\gamma_1u^2+\delta_1u^3$, $b=\alpha_2u+\beta_2v+\gamma_2u^2+\delta_2u^3$ and $c=r+\alpha_3u+\beta_3v+\gamma_3u^2+\delta_3u^3$. Assume $ab=0$. Then
\begin{eqnarray*}
0&=&ab\\
&=&(\alpha_1u+\beta_1v+\gamma_1u^2+\delta_1u^3)(\alpha_2u+\beta_2v+\gamma_2u^2+\delta_2u^3)\\
&=&\alpha_1\alpha_2u^2+x
\end{eqnarray*}
for some $x\in J(R)^3$ showing $\alpha_1\alpha_2=0$. Now, $acb=arb+\alpha_1\alpha_3\alpha_2u^3=0$. So, if $ab=0$ then $aRb=0$ and $R$ is semicommutative.
\end{proof}

\begin{theorem}[Minimal Nonreflexive Abelian Nonsemicommutative]
A minimal nonreflexive abelian nonsemicommutative ring is of order 64 an example of which is
\[
\frac{\F_2\id{u,v}}{\id{u^2,v^2,uvu-vuv}}
\]
\end{theorem}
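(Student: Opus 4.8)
The plan is to prove the bound $|R|\ge 64$ for every nonreflexive abelian nonsemicommutative ring $R$, and then to check that the displayed ring $A=\frac{\F_2\id{u,v}}{\id{u^2,v^2,uvu-vuv}}$ is an abelian, nonsemicommutative, nonreflexive ring of order exactly $64$. For the bound it suffices to treat abelian nonsemicommutative rings $R$. By Lemma~\ref{lemma_atame}(\ref{lemma_atame_1}) the ring decomposition of such an $R$ has $M=0$ and every $R_i$ local, so $R=\bigoplus_i R_i$ is a ring direct sum of local rings. As noted after Figure~\ref{fig_1}, a finite ring has a given one of the listed properties if and only if each of its ring direct summands does; hence some $R_i$ is local and nonsemicommutative, and Proposition~\ref{prop_semi} then forces $|R_i|\ge 64$, so $|R|\ge 64$.

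It remains to analyse $A$. Its defining relations are homogeneous, so $A$ is a connected graded $\F_2$-algebra whose augmentation ideal is nilpotent; thus $A$ is local and therefore abelian. To obtain $|A|=64$, first reduce words in $u,v$ via $u^2=v^2=0$ to alternating words: the two alternating words of length $3$ are identified because $uvu=vuv$, and every word of length $\ge 4$ lies in the defining ideal (a non-alternating one contains $u^2$ or $v^2$; for an alternating one, $uvuv=(uvu-vuv)v+vuv^2$ and symmetrically for $vuvu$, while longer alternating words contain such a subword). Hence $\{1,u,v,uv,vu,uvu\}$ spans $A$ and $|A|\le 2^6$. For the reverse inequality, let $V$ be the $6$-dimensional $\F_2$-space with basis labelled by these six monomials and define $L_u,L_v\in\mathrm{End}_{\F_2}(V)$ mimicking left multiplication by $u$ and $v$; a direct check shows $L_u^2=L_v^2=0$ and $L_uL_vL_u=L_vL_uL_v$, so there is an algebra map $A\to\mathrm{End}_{\F_2}(V)$, and applying the images of the six monomials to the basis vector ``$1$'' recovers all six basis vectors, so the evaluation map $A\to V$ is onto. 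Hence $\dim_{\F_2}A=6$ and $|A|=64$.

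For the remaining two properties: $A$ is nonsemicommutative because $u\cdot u=0$ while $u\cdot v\cdot u=uvu$ is a nonzero basis element, so $uRu\ne 0$; and $A$ is nonreflexive because, taking $a=uv$ and $b=v$, each of the six products $uv\cdot r\cdot v$ with $r$ a basis monomial contains a subword $v^2$ or $uvuv$ and hence vanishes, so $aRb=0$, whereas $b\cdot 1\cdot a=vuv=uvu\ne 0$, so $bRa\ne 0$.

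The only step beyond routine finite checks is confirming $\dim_{\F_2}A=6$, i.e. that the three relations impose no collapse beyond the alternating-word reduction; the explicit $6$-dimensional representation above does this, and alternatively one can run the diamond lemma with $vuv$ taken as the leading monomial of $uvu-vuv$, verifying that the overlap ambiguities (of $uvu$ and $vuv$ with $u^2$ and $v^2$, and of $vuv$ with itself in $vuvuv$) all resolve to the same normal form. I expect that to be the main obstacle; everything else reduces to arithmetic with the six monomials $1,u,v,uv,vu,uvu$.
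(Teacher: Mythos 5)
Your proposal is correct and follows essentially the same route as the paper: the lower bound comes from Proposition \ref{prop_semi} applied to the local summands of an abelian ring (via Lemma \ref{lemma_atame}), and the example is verified directly with the same nonsemicommutativity witness $u\cdot u=0$ but $uvu\neq 0$, and an equivalent nonreflexivity witness ($a=uv$, $b=v$ in place of the paper's $a=u$, $b=uv$). The one place you go beyond the paper is the explicit six-dimensional left-regular representation proving $\dim_{\F_2}A=6$, a point the paper asserts without justification; that verification is sound.
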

\begin{proof}
By Proposition \ref{prop_semi}, an abelian ring of order less than 64 is semicommutative. Let
\[
R=\frac{\F_2\id{u,v}}{\id{u^2,v^2,uvu-vuv}}.
\]
$R$ is local, $J(R)=\F_2u+\F_2v+J^2$, $J^2=\F_2uv+\F_2vu+J^3$, $J^3=\F_2uvu$ and $J^4=0$. So, $|R|=64$. Notice $u^2=0$ but $uvu\neq 0$ showing $R$ is nonsemicommutative.

To see that $R$ is nonreflexive, notice, for $\alpha,\beta,\gamma,\delta,\epsilon,\zeta\in F$,
\[
u(\alpha+\beta u+\gamma v+\delta uv+\epsilon vu+\zeta uvu)uv=\alpha u^2v=0
\]
but $0\neq uvu\in uvRu$. So, $uRuv=0$ but $uvRu\neq 0$ showing $R$ is nonreflexive.
\end{proof}

\begin{remark}
\label{remark_1}
The ring
\[
\frac{\F_2\id{u,v}}{\id{u^2,v^2,uvu-vuv}}
\]
was shown to be a minimal nonreflexive abelian nonsemicommutative ring. From the proof it can be seen that it is actually minimal among all nonsemicommutative rings, reflexive or not.
\end{remark}

\begin{theorem}[Minimal Nonreflexive NI Nonabelian]
\label{theo_minref}
The minimal NI nonabelian nonreflexive ring is $\Ra$ and is of order 8.
\end{theorem}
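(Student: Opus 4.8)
The plan is to verify directly that $\Ra$ enjoys all four properties and then to use Proposition \ref{prop_eld} to show that nothing smaller can.

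First I would set up the decomposition of $\Ra=U_2(\F_2)$, the ring of upper triangular $2\times2$ matrices over $\F_2$, via the orthogonal idempotents $e_1=e_{11}$ and $e_2=e_{22}$ with $e_1+e_2=1$. One has $R_1=e_1\Ra e_1\cong\F_2$, $R_2=e_2\Ra e_2\cong\F_2$, $M_{12}=e_1\Ra e_2=\F_2e_{12}$ and $M_{21}=e_2\Ra e_1=0$, so $S\cong\F_2\oplus\F_2$, $M=\F_2e_{12}$, and $|\Ra|=2^3=8$; moreover $\Ra$ is noncommutative since $e_1e_{12}=e_{12}\neq0=e_{12}e_1$. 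Now I would read the three remaining properties off Lemma \ref{lemma_atame}: since $R_1$ and $R_2$ are local, part (\ref{lemma_atame_2}) gives that $\Ra$ is NI; since $M\neq0$, part (\ref{lemma_atame_1}) gives that $\Ra$ is not abelian; and since $M^2=\F_2(e_{12}e_{12})=0$ with $M\neq0$, part (\ref{lemma_atame_5}) gives that $\Ra$ is nonreflexive. Thus $\Ra$ is an NI, nonabelian, nonreflexive ring of order $8$.

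For minimality, the key point is that a commutative ring is trivially abelian, so a nonabelian ring is necessarily noncommutative. A finite ring with identity is a direct sum of rings of prime power order, and if the total order is less than $8$ then every such summand has order $p$ or $p^2$ and so is commutative by Proposition \ref{prop_eld}; hence every ring of order less than $8$ is commutative and therefore abelian. Consequently a nonabelian ring has order at least $8$, and again by Proposition \ref{prop_eld} the unique noncommutative ring of order $8=2^3$ is $\Ra$. Therefore $\Ra$ is the minimal NI nonabelian nonreflexive ring.

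I do not anticipate any real obstacle: the one point requiring care is the reduction showing that a nonabelian ring must be noncommutative, which is what lets Proposition \ref{prop_eld} force the order up to $8$; once that is in place, the four properties of $\Ra$ itself follow immediately from its decomposition together with Lemma \ref{lemma_atame}.
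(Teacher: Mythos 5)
Your proof is correct, and it reaches the same overall conclusion by the same global strategy (exhibit the properties of $\Ra$, then use Proposition \ref{prop_eld} to rule out smaller orders), but it differs from the paper's proof in one substantive way: where the paper establishes that $\Ra$ is nonreflexive by appealing to Proposition \ref{prop_refl} (the external classification from \cite{kim_2011} saying the minimal noncommutative reflexive rings are $\Rd$ and $\Rn$, both of order 16, so the order-8 noncommutative ring $\Ra$ cannot be reflexive), you derive nonreflexivity internally from the Peirce decomposition via Lemma \ref{lemma_atame}(\ref{lemma_atame_5}), using $M=\F_2e_{12}\neq 0$ and $M^2=0$. Your route is more self-contained and elementary -- it needs only the decomposition machinery already set up in the preliminaries rather than the full order-16 classification of reflexive rings -- and it has the small bonus of also yielding nonabelianness from part (\ref{lemma_atame_1}) of the same lemma with no extra work. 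The paper's citation of Proposition \ref{prop_refl} is shorter on the page but logically heavier. Your minimality argument (nonabelian forces noncommutative, rings of order less than $8$ decompose into summands of order $p$ or $p^2$ and are hence commutative by Proposition \ref{prop_eld}, and $\Ra$ is the unique noncommutative ring of order $8$) is exactly what the paper compresses into the sentence ``$\Ra$ is the minimal nonabelian ring since it is not abelian and is the unique minimal noncommutative ring,'' just spelled out more carefully. No gaps.
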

\begin{proof}
By Proposition \ref{prop_refl}, $\Ra$ is nonreflexive.
Clearly $\Ra$ is the minimal nonabelian ring since it is not abelian and is the unique minimal noncommutative ring. By Lemma \ref{lemma_atame}(\ref{lemma_atame_2}), $\Ra$ is NI.
\end{proof}

\begin{theorem}[Minimal Non-NI Nonreflexive]
\label{theo_nninref}
A minimal non-NI nonreflexive ring has order 128. The complete list of such rings is
\[
R=\left(
  \begin{array}{cc}
    \Rn & \left(\begin{array}{cc}\F_2&0\\\F_2&0 \\\end{array}\right)\\
    0 & \F_2 \\
  \end{array}
\right), ~R^{op}
\textrm{ and }
\Rn\oplus\Ra.
\]
\end{theorem}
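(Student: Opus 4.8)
The plan is to combine the structure theory for non-NI rings from Proposition \ref{prop_2prim} with the reflexivity analysis of Lemma \ref{lemma_atame}(\ref{lemma_atame_5}), and then to sharpen the decomposition bookkeeping for the one remaining non-NI building block, namely $\Rn$. First I would recall that by Lemma \ref{lemma_eq} an NI ring is the same as a 2-primal (or PS~I) ring here, and that the smallest non-NI ring is $\Rn$ of order $16$, which is itself reflexive by Proposition \ref{prop_refl}. Hence a non-NI nonreflexive ring $R$ is strictly larger than $16$, and since non-NI-ness and nonreflexivity are each inherited from (but not necessarily to) direct summands, $R$ must have a direct summand that is non-NI; by Proposition \ref{prop_2prim} every non-NI ring has a summand isomorphic to $\Rn$. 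Write $R = \Rn \oplus R'$ where $R'$ is chosen maximal so that all of $R'$'s ``own'' non-NI content is already peeled off — more carefully, I would argue via the decomposition $\{e_1,\dots,e_m\}$ of Lemma \ref{lemma_atame} that some primary component $R_i$ is a full matrix ring $M_n(T)$ with $n>1$, and that minimality forces $R_i \cong \Rn = M_2(\F_2)$.

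Next I would split into the two genuinely different ways $R$ can fail to be reflexive. If the off-diagonal bimodule $M$ in the decomposition is zero, then $R = S = \bigoplus_i R_i$ is a direct sum of primary rings; for $R$ to be nonreflexive while having $\Rn$ as a summand we need the complement to force a reflexivity failure, but a direct sum of reflexive rings is reflexive, so the only way $M=0$ works is when $R$ itself — hmm, in fact $\Rn$ is reflexive, so $M=0$ cannot produce a nonreflexive ring unless some $R_i$ is nonreflexive, which by Proposition \ref{prop_refl} and $M_2(\F_2)$ being reflexive means we'd need a $4\times 4$ or larger matrix ring — too big. So the genuinely minimal examples with $M=0$ must in fact have $M\neq 0$ after all; wait — the one exception is precisely the \emph{decomposable} case $\Rn\oplus\Ra$: here $\Ra$ is itself nonreflexive (Theorem \ref{theo_minref}) and a direct sum with a nonreflexive summand is nonreflexive, while $\Ra$ contributes the ``nonreflexive'' and $\Rn$ the ``non-NI,'' giving order $8\cdot 16 = 128$. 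For the indecomposable case, $M\neq 0$ and I would try to take $M^2=0$ so that Lemma \ref{lemma_atame}(\ref{lemma_atame_5}) hands nonreflexivity for free; then the task is to build the \emph{smallest} indecomposable ring with one primary component $\Rn$, the rest of $S$ forced to be as small as possible (a single $\F_2$), and $M$ as small as possible.

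The main obstacle — and the heart of the order-$128$ bound — is the module-theoretic constraint already flagged at the end of the proof of Proposition \ref{prop_2prim}: the minimal one-sided submodules of $\Rn = M_2(\F_2)$ over itself are $2$-dimensional over $\F_2$, so a bimodule $M_{12}$ between $\Rn$ and $\F_2$ that is nonzero must have $|M_{12}|\geq 4$ (it must contain a nonzero $\Rn$-submodule on the $\Rn$ side, or symmetrically on the other side a copy of the minimal left module). Thus $|R| \geq |\Rn|\cdot|\F_2|\cdot|M| \geq 16\cdot 2\cdot 4 = 128$, and equality pins down $M_{12}$ (up to side) as the column module $\left(\begin{smallmatrix}\F_2&0\\\F_2&0\end{smallmatrix}\right)$ with $M_{21}=0$, which is exactly the displayed ring $R$; its opposite $R^{op}$ is the other chirality. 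I would then finish by a direct verification: for each of the three listed rings, check non-NI-ness by exhibiting $\Rn$ as a component (so $N\ntrianglelefteq$ the ring, using the $e_{12},e_{21}$ argument of Lemma \ref{lemma_atame}(\ref{lemma_atame_2})), and check nonreflexivity either from Lemma \ref{lemma_atame}(\ref{lemma_atame_5}) (since $M^2=0$, $M\neq 0$) for the upper-triangular $R$ and $R^{op}$, or from the nonreflexive summand $\Ra$ for $\Rn\oplus\Ra$. Finally I would rule out any further order-$128$ examples: any other decomposable one is $\Rn$ (the only non-NI piece $\le 16$) times a nonreflexive ring of order $8$, and $\Ra$ is the unique such ring (order $8$, nonreflexive); any other indecomposable one hits the $|M_{12}|\geq 4$ wall with no room to spare, so the column module over $M_2(\F_2)$ and its transpose are the only options.
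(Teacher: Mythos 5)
Your proposal is correct and takes essentially the same route as the paper's proof: both settle the decomposable case via Proposition \ref{prop_2prim} together with the uniqueness of $\Ra$ as the minimal nonreflexive ring, and settle the indecomposable case by observing that the minimal one-sided $\Rn$-modules are isomorphic to $\F_2^2$, forcing $|M_{12}|\geq 4$ and hence order at least $16\cdot 2\cdot 4=128$. One small inaccuracy worth fixing in your $M=0$ digression: a nonreflexive primary component need not be a $4\times 4$ or larger matrix ring (there exist local nonreflexive rings of order $16$, namely the semicommutative nonreversible rings of Proposition \ref{prop_semic}), but since $16\cdot 16=256>128$ your conclusion that this configuration is too large still stands.
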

\begin{proof}
By Proposition \ref{prop_2prim}, the only non-NI indecomposable rings of order less than 128 are $\Rn$ and $\Rnv{3}$. But, these are both reflexive. By Theorem \ref{theo_minref}, the smallest nonreflexive ring is $\Ra$ which is unique. So, $\Rn\oplus\Ra$ is the unique decomposable minimal non-NI nonreflexive of order 128.

An indecomposable example would have to have at least one of the component rings in its ring decomposition to be nonlocal by Lemma \ref{lemma_atame}. Since the minimal right or left modules of $\Rn$ are isomorphic to $\F_2^2$, the only indecomposable non-NI non-reflexive rings of order 128 are
\[
R=\left(
  \begin{array}{cc}
    \Rn & \left(\begin{array}{cc}\F_2&0 \\\F_2&0 \\\end{array}\right) \\
    0 & \F_2 \\
  \end{array}
\right)
\]
and
\[
R^{op}=\left(
  \begin{array}{cc}
    \Rn & 0\\
    \left(\begin{array}{cc}\F_2&\F_2 \\0&0 \\\end{array}\right) & \F_2 \\
  \end{array}
\right),
\]
which is clearly not isomorphic to $R$. By the uniqueness of the components, it is clear that there are no other such rings.
\end{proof}

\subsection{Minimal Reflexive Rings}
In this section minimal reflexive rings are identified. The following diagram shows the ring class inclusions for finite reflexive rings.

\begin{center}
\xymatrixrowsep{1pc}\xymatrixcolsep{1pc}\xymatrix{
&&\textrm{symmetric}\ar@{=>}[dr]\\
\textrm{reduced}\ar@{=>}[r]&\textrm{commutative}\ar@{=>}[dr]\ar@{=>}[ur]&&\textrm{reversible}\ar@{=>}[r]&\textrm{abelian}\ar@{=>}[r]&\textrm{NI}\\
&&\textrm{duo}\ar@{=>}[ur]}
\end{center}

Of course $\F_2$ is the minimal reduced ring and the minimal nonreduced commutative rings are the indecomposable rings of order 4 that are not fields namely $\Fx{2}$ and $\Z_4$.

\begin{theorem}[Minimal Symmetric Duo Noncommutative]
The minimal symmetric duo noncommutative ring is $\Rd$ and is of order 16.
\end{theorem}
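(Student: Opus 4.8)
The plan is to establish two things: first, that there is no noncommutative symmetric duo ring of order smaller than $16$, and second, that among the rings of order $16$, $\Rd$ is the unique one that is simultaneously symmetric, duo, and noncommutative. The lower bound is immediate from the preliminaries: by Proposition \ref{prop_eld} there are no noncommutative rings of order $p$, $p^2$, or $p^3$, and by Proposition \ref{prop_semic} (together with Remark \ref{rem_l16}) any noncommutative ring of order at most $8$ is commutative, so the first noncommutative candidates occur at order $16$. For the upper-bound side, since symmetric implies reversible implies semicommutative, any minimal symmetric duo noncommutative ring is in particular a minimal noncommutative semicommutative ring, hence by Proposition \ref{prop_semic} it has order $16$ and lies on the list $\Rd$, $\Rb$, $\Rj$, $\Rc$, $\Rk$. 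It must also be duo, so by Proposition \ref{prop_duo} it is one of $\Rd$, $\Rb$, $\Rj$.

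The heart of the argument is then to decide which of $\Rd$, $\Rb$, $\Rj$ are symmetric. First I would verify that $\Rd = \F_4[x;\sigma]/\id{x^2}$ is symmetric. It is local with Jacobson radical $J = \F_4 x$, and $J^2 = 0$; writing a typical element as $\alpha + \beta x$ with $\alpha,\beta\in\F_4$, a product $abc$ involves the skew multiplication rule $x\alpha = \alpha^\sigma x$ but every monomial of degree $\ge 2$ in $x$ vanishes. One checks directly that $abc = 0$ forces the constant terms to multiply to zero, and the remaining (degree-one) part of $abc$ is symmetric under swapping the first two factors because $\F_4$ is commutative and the twist $\sigma$ is applied uniformly; hence $abc=0\Rightarrow bac=0$. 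Then I would show $\Rb$ and $\Rj$ are \emph{not} symmetric by exhibiting explicit triples: in $\Rb=\F_2\id{u,v}/\id{u^3,v^3,vu,u^2-uv,v^2-uv}$ one has $vu = 0$, so e.g. $v\cdot u\cdot u = 0$, and one computes $u\cdot v\cdot u$ (equivalently check $uvu$ or a suitable rearrangement) to be nonzero in the quotient, contradicting symmetry; the same relation $vu=0$ with $uv\neq 0$ does the analogous job in $\Rj$, the $\Z_4$-analogue. (One should pick the triple carefully so that the "obvious" rearrangement is visibly nonzero using the defining relations $u^2 = uv$, $v^2 = uv$.)

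The main obstacle is the explicit verification that $\Rd$ really is symmetric rather than merely reversible — one must be sure the skew-polynomial twist does not break the three-variable condition, so the computation $abc = 0 \Rightarrow bac = 0$ has to be done with the relation $x\alpha = \alpha^\sigma x$ carried through honestly. A clean way to organize it: decompose each of $a,b,c$ into its $\F_4$-part and its $J$-part, expand $abc$ using $J^2 = 0$ so that at most one factor contributes from $J$, and observe that in each of the three resulting terms the $\F_4$-coefficients commute while the single $x$ either sits at the front (no twist) or gets one application of $\sigma$; in all cases the vanishing condition is symmetric in the first two slots. Once this is in hand, combining it with the exclusions of $\Rb$ and $\Rj$ above leaves $\Rd$ as the unique ring of order $16$ that is symmetric, duo, and noncommutative, and the lower bound from Proposition \ref{prop_eld} and Remark \ref{rem_l16} finishes the proof.
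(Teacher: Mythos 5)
Your overall strategy is sound and leads to the right answer, but it is genuinely different from the paper's argument, and one of your concrete computations is wrong as stated. The paper never verifies symmetry of $\Rd$ directly and never exhibits non-symmetric triples in $\Rb$ or $\Rj$: it observes that symmetric $\Rightarrow$ reversible $\Rightarrow$ semicommutative \emph{and} reflexive (Lemma \ref{lemma2}), so a symmetric duo noncommutative ring must appear on both the list of Proposition \ref{prop_semic} and the list of Proposition \ref{prop_refl}, which singles out $\Rd$ at once (this disposes of $\Rb$ and $\Rj$ via reflexivity rather than by computation); it then gets symmetry of $\Rd$ for free from the result of \cite{szabo_2017} that every reversible ring of order less than $256$ is symmetric. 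Your route replaces both of these citations with hands-on verification, which makes the proof more self-contained at the cost of the case analysis in $\F_4[x;\sigma]/\id{x^2}$. That verification does go through, but be careful with your phrasing: the degree-one coefficient of $abc$, namely $\alpha_1\alpha_2\beta_3+\alpha_1\beta_2\alpha_3^\sigma+\beta_1\alpha_2^\sigma\alpha_3^\sigma$, is \emph{not} literally symmetric under swapping the first two factors (the twist $\sigma$ lands on different coefficients); only its vanishing locus is preserved, and seeing that requires the case split on which $\alpha_i$ vanish.

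The concrete error is your proposed witness in $\Rb$ and $\Rj$: you suggest $v\cdot u\cdot u=0$ while $u\cdot v\cdot u\neq 0$, but in both rings $uvu=u(vu)=u\cdot 0=0$, so that rearrangement is also zero and the triple proves nothing. The fix is immediate and you half-anticipate it: since $vu=0$ but $uv\neq 0$ (indeed $uv=u^2\neq 0$ in $\Rb$ and $uv=2\neq 0$ in $\Rj$), these rings are not even reversible, so the triple $(a,b,c)=(v,u,1)$ gives $abc=0$ and $bac=uv\neq 0$. With that substitution your argument is complete and correct.
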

\begin{proof}
Let $R=\Rd$. By Propositions \ref{prop_semic} and \ref{prop_refl}, $R$ is the minimal semicommutative reflexive duo ring. By Lemma \ref{lemma2}, $R$ is reversible. In \cite{szabo_2017} it was shown that a minimal reversible nonsymmetric ring is of order 256.  Since $|R|=16$, $R$ is symmetric. The symetricity of $R$ can easily be shown directly.
\end{proof}

\begin{theorem}[Minimal Symmetric Nonduo]
A minimal symmetric nonduo ring is of order 32 an example of which is
\[
\F_2\id{u,v}\over\id{u^3,v^2,u^2+uv+vu,uvu}.
\]
\end{theorem}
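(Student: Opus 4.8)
The plan is to establish a lower bound of $32$ on the order and then verify that the displayed ring works. For the lower bound, recall that a symmetric ring is reversible, hence semicommutative, hence abelian, hence NI; and a symmetric ring is in particular reflexive. So a minimal symmetric nonduo ring is a minimal reflexive nonduo NI ring, and by the structure results already assembled it suffices to rule out orders $p$, $p^2$, $p^3$, $16$, and all non-$2$-power orders below $32$. Commutative rings are duo, so I only need to consider noncommutative candidates. By Proposition \ref{prop_eld} there are no noncommutative rings of order $p$, $p^2$, or $p^3$ except $\Rav{p}$, and $\Rav{2}=\Ra$ is nonreflexive by Proposition \ref{prop_refl} (or is non-duo but also not symmetric); for order $p^3$ with $p$ odd one checks $\Rav{p}$ is not NI — actually it is NI but nonreflexive, to be confirmed via Lemma \ref{lemma_atame}. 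For order $16$, the noncommutative local rings are exactly the five rings of Proposition \ref{prop_semic}, and by Proposition \ref{prop_duo} three of them ($\Rd$, $\Rb$, $\Rj$) are duo while the other two ($\Rc$, $\Rk$) are nonreflexive by Proposition \ref{prop_refl}; noncommutative nonlocal rings of order $16$ are listed in Proposition \ref{prop_nc16}, and all of them except $\Rn$ are non-NI — wait, they are NI but not reflexive — in any case none is symmetric. Finally, no order strictly between $16$ and $32$ arises among ring orders we must consider, and a decomposable symmetric ring of order $< 32$ would need a noncommutative symmetric summand of order $\le 16$, which by the above must be duo. Hence a symmetric nonduo ring has order at least $32$.

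For the upper bound, set
\[
R=\frac{\F_2\id{u,v}}{\id{u^3,v^2,u^2+uv+vu,uvu}}.
\]
First I would pin down the structure of $R$: it is a local ring with residue field $\F_2$, $J=J(R)=\F_2u+\F_2v+J^2$, $J^2=\F_2u^2 = \F_2(uv+vu)$ (using the relation $u^2=uv+vu$), $J^3=0$ (since $u^3=0$, $v^2=0$, $vu^2 = v(uv+vu) = (vu)v + vu^2$ forces relations collapsing $J^3$, and $uvu=0$ is imposed), so $|R|=2^5=32$. I would exhibit an $\F_2$-basis $\{1,u,v,uv,vu,u^2\}$ — no, that overcounts; rather $\{1,u,v,u^2\}$ gives $|R|=16$, so I must recount: with $u^2 = uv+vu$ the element $u^2$ is not independent of $uv$ and $vu$, and one of $uv,vu$ survives, giving basis $\{1,u,v,uv\}$ or $\{1,u,v,vu\}$ of size... this needs the careful monomial-reduction bookkeeping, and that is the one genuinely fiddly computation.

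The main obstacle is exactly this: computing a reduced Gröbner basis (or equivalently a consistent set of PBW-type monomials) for the ideal $\id{u^3,v^2,u^2+uv+vu,uvu}$ in $\F_2\id{u,v}$ to confirm $\dim_{\F_2}R=5$ and to read off the full multiplication table, and then checking that $abc=0\Rightarrow bac=0$ for all $a,b,c\in R$. Since $R$ is local with $R/J\cong\F_2$, any product of three elements of $R$ that vanishes either has at least one unit factor (reduce to a product of two elements and use that one can check the two-element reversibility-type condition) or lies in $J^3$; because $J^3$ is small (one- or zero-dimensional) the verification that $abc=0\iff bac=0$ reduces to finitely many checks on the basis, which I would organize by writing $a,b,c$ in terms of the basis and expanding, using $u^2+uv+vu=0$, $uvu=0$, $u^3=0$, $v^2=0$ to normal form. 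I would also exhibit two elements witnessing non-duo-ness, e.g. show $vu\notin uR$ (equivalently $Ru\not\ni vu$), i.e. $uR=\F_2u+\F_2uv+\F_2u^2\ne Ru$, so some right ideal is not two-sided. The symmetry check, while elementary, is the part I expect to take the most care; everything else follows from the already-cited classifications.
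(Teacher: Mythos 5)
Your lower bound is sound and essentially the paper's: a symmetric ring is reversible, hence semicommutative, hence abelian, so an indecomposable one is local; Proposition \ref{prop_eld} disposes of orders $p$, $p^2$, $p^3$, and for order $16$ the noncommutative local rings are exactly the five of Proposition \ref{prop_semic}, which you handle correctly (three are duo, and the two nonduo ones are nonreflexive, hence nonsymmetric; the paper instead observes that none of the five is reversible). The detours through nonlocal rings of order $16$ are unnecessary once abelianness is in hand. So a symmetric nonduo ring has order at least $32$.

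The upper bound, however, has genuine gaps. First, you never establish $|R|=32$: the correct $\F_2$-basis is $\{1,u,v,uv,vu\}$, with $u^2=uv+vu$ a dependent element and every degree-three monomial equal to zero (e.g. $0=u^3=(uv+vu)u=uvu+vu^2=vu^2$ and $vu^2=v(uv+vu)=vuv$, so $vuv=0$ as well). Your intermediate claims that $J^2=\F_2u^2$ is one-dimensional and that only one of $uv,vu$ survives are both false: $J^2=\F_2uv+\F_2vu$ is two-dimensional. Second, your nonduo witness fails. Since $u(u+v)=u^2+uv=vu$, we have $vu\in uR$, and in fact $uR=Ru=\F_2u+\F_2uv+\F_2vu$, so the right ideal $uR$ is two-sided and detects nothing. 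The failure occurs at $v$ on the left: $v(u+v)=vu\notin Rv=\F_2v+\F_2uv$, so $R$ is not left duo, and Lemma \ref{lemma_duo} then gives that $R$ is not duo. Third, you never actually verify symmetry; you only outline a plan to check $abc=0\Rightarrow bac=0$ over all triples and acknowledge it is not carried out. The paper sidesteps that computation entirely: it verifies reversibility by a short calculation on two general elements of $J(R)$ (only the coefficients of $u$ and $v$ matter, since all degree-three products vanish), and then invokes the result of \cite{szabo_2017} that every reversible ring of order less than $256$ is symmetric. Without either that citation or a completed direct check, the symmetry of $R$ --- the heart of the statement --- is not established.
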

\begin{proof}
By Remark \ref{rem_l16}, the rings listed in Proposition \ref{prop_semic} are the noncommutative local rings of order 16. By inspection, it is clear that none of these rings are reversible hence they are nonsymmetric. For a prime $p$, by Proposition \ref{prop_eld}, any local ring of order $p$, $p^2$ or $p^3$ is commutative. So, any symmetric ring of order less than 32 is commutative and therefore duo.

Let
\[
R={\F_2\id{u,v}\over\id{u^3,v^2,u^2+uv+vu,uvu}}.
\]
First note that any third degree monomial in $u$ and $v$ is 0. It is easy to see then that $R$ is an $\F_2$-algebra with basis $\{1,u,v,uv,vu\}$ and $|R|=32$. Next, $R$ is not left duo since $v(u+v)=vu\notin\{av+buv|a,b\in\F_2\}=Rv$. By Lemma \ref{lemma_duo}, $R$ is not duo. To show $R$ is reversible, assume
\[
(a_1+b_1u+c_1v+d_1uv+e_1vu)(a_2+b_2u+c_2v+d_2uv+e_2vu)=0
\]
for $a_i,b_i,c_i,d_i,e_i\in\F_2$. Since $R$ is local, $a_1=a_2=0$. Then
\begin{eqnarray*}
0&=&(b_1u+c_1v+d_1uv+e_1vu)(b_2u+c_2v+d_2uv+e_2vu)\\
&=&(b_1c_2+b_1b_2)uv+(c_1b_2+b_1b_2)vu
\end{eqnarray*}
showing $(b_1c_2+b_1b_2)=0$ and $(c_1b_2+b_1b_2)=0$. So,
\begin{eqnarray*}
0&=&(b_1c_2+b_1b_2)vu+(c_1b_2+b_1b_2)uv\\
&=&(a_2+b_2u+c_2v+d_2uv+e_2vu)(a_1+b_1u+c_1v+d_1uv+e_1vu)
\end{eqnarray*}
and $R$ is reversible. From \cite{szabo_2017}, a reversible ring of order less than 256 is symmetric. Since $|R|=32$, $R$ is symmetric. Hence, $R$ is a minimal symmetric nonduo ring.
\end{proof}

\begin{theorem}[Minimal Reversible Nonsymmetric Right Duo]
A minimal reversible nonsymmetric right duo ring is of order 256 an example of which is $\F_2Q_8$.
\end{theorem}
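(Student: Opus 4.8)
The plan is to split the statement into a lower bound (any reversible nonsymmetric right duo ring has order at least $256$) and the exhibition of an example of order $256$. The lower bound is immediate from earlier work: by \cite{szabo_2017} a reversible ring of order less than $256$ is symmetric, so any reversible nonsymmetric ring, and a fortiori one that is in addition right duo, has order at least $256$. Hence everything reduces to checking that $\F_2Q_8$ is reversible, nonsymmetric, right duo, and of order $256$; once that is done, $\F_2Q_8$ is such a minimal ring.

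The order is immediate: $|\F_2Q_8|=2^{|Q_8|}=2^8=256$. That $\F_2Q_8$ is reversible is Marks' answer to Lam's question recalled in the Introduction, and that it is nonsymmetric (so that the bound $256$ is genuinely attained) is among the results of \cite{szabo_2017}; I would cite both rather than reprove them. Thus the only point that needs new work is that $\F_2Q_8$ is right duo.

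Set $R=\F_2Q_8$ and $z=-1\in Q_8$. Since $R$ is $\F_2$-spanned by $Q_8$ and every principal right ideal is an additive subgroup, $R$ is right duo if and only if $ga\in aR$ for all $g\in Q_8$ and $a\in R$. Using the identity $ga=(gag^{-1})g=\phi_g(a)g$, where $\phi_g$ is the inner automorphism $x\mapsto gxg^{-1}$, and that $g^{-1}$ is a unit, this is equivalent to $\phi_g(a)\in aR$ for all $g\in Q_8$, $a\in R$. Now put $I=(1+z)R$. Since $z$ is central of order $2$ one has $(1+z)^2=0$, so $I$ is a central ideal with $I^2=0$ (and $\dim_{\F_2}I=4$). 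Because $\phi_g$ acts trivially on $Q_8/\langle z\rangle$, it fixes $1+z$ and satisfies $\phi_g(a)-a\in I$ for every $a$, and moreover $\phi_g$ restricts to the identity on $I$ (using $I^2=0$). Consequently: if $a\notin J(R)$ then $a$ is a unit and $\phi_g(a)\in R=aR$, while if $a\in I$ then $\phi_g(a)=a\in aR$. The remaining case is $a\in J(R)\setminus I$, where one verifies $\phi_g(a)-a\in aR$ by a direct computation in the $8$-dimensional algebra $R$ against the basis $Q_8$ (noting that $\phi_g(a)-a$ lands in a fixed $2$-dimensional subspace of $I$ depending on $g$, while $aR$ is correspondingly large whenever that difference is nonzero). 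This finite, somewhat tedious check is the real obstacle and the only laborious part of the argument, and it is also contained in \cite{szabo_2017}. Assembling the cases shows $R$ is right duo, completing the proof.
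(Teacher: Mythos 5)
The paper disposes of this theorem in one line, citing \cite{szabo_2017} for the whole statement, so at bottom you and the paper rest on the same source; the difference is that you decompose the claim and try to supply some of the content yourself. Your decomposition is sound: the lower bound does follow from the fact (also from \cite{szabo_2017}, and quoted elsewhere in this paper) that a reversible ring of order less than $256$ is symmetric; the order count $|\F_2Q_8|=2^8$ is right; and reversibility and nonsymmetry are correctly attributed to the literature. Your reduction of the right duo property is also correct as far as it goes: since $\F_2Q_8$ is spanned by $Q_8$ and $aR$ is additively closed, right duo is equivalent to $\phi_g(a)\in aR$ for all $g\in Q_8$; the ideal $I=(1+z)R$ is central with $I^2=0$, $\phi_g$ is the identity on $I$ and congruent to the identity modulo $I$, and $\F_2Q_8$ is local (a modular group algebra of a $2$-group), so units and elements of $I$ are handled immediately. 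What you have not actually proved is the only case with content, $a\in J(R)\setminus I$, which you declare a ``finite, somewhat tedious check'' and defer back to \cite{szabo_2017}. So your argument is not a gap relative to the paper's own standard of proof --- the paper proves nothing here either --- but be aware that the one step you isolate as needing work is precisely the step you do not carry out; if the goal were a self-contained proof, that case (e.g., verifying that $(1+z)$ times the anticommuting part of $a$ lies in $aR$, as in the sample computation $(1+i)^2=1+z$) is where all the labor lives.
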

\begin{proof}
One of the main results of \cite{szabo_2017}.
\end{proof}

\begin{theorem}[Minimal Reversible Nonsymmetric Nonduo]
A minimal reversible nonsymmetric nonduo ring is of order 256 an example of which is
\[
\frac{\F_2\id{u,v}}{\id{u^3,v^3,u^2+v^2+vu,vu^2+uvu+vuv}}.
\]
\end{theorem}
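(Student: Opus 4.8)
The statement has two halves: a lower bound and a concrete example. For the lower bound, recall from \cite{szabo_2017} (as already used in the proof of the minimal symmetric nonduo ring) that a reversible ring of order less than $256$ is symmetric; hence a reversible nonsymmetric ring, and therefore also a reversible nonsymmetric nonduo ring, has order at least $256$. It then remains to exhibit such a ring of order $256$, and I would verify that
\[
R=\frac{\F_2\id{u,v}}{\id{u^3,v^3,u^2+v^2+vu,vu^2+uvu+vuv}}
\]
does the job. By \cite{szabo_2017} (this is the second minimal reversible nonsymmetric ring noted in the introduction) $R$ is already known to be reversible, nonsymmetric, and of order $256$, so the only genuinely new assertion is that $R$ is nonduo; I would nonetheless run through all four claims.

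The ambient structure is quickly dispatched: since $u,v$ are nilpotent, $J(R)=\F_2u+\F_2v+J(R)^2$ and $R/J(R)\cong\F_2$, so $R$ is local, and a routine rewriting computation with the defining relations produces an $\F_2$-basis of $R$ and gives $|R|=256$ (see \cite{szabo_2017}); in particular $uvu\neq 0$. For the nonduo claim it suffices, by Lemma \ref{lemma_duo}, to show $R$ is not left duo, and indeed $uv\in uR$ but $uv\notin Ru$: every element of $Ru$ lies in $\F_2u+\F_2u^2+\F_2vu+J(R)^3$, which does not contain $uv$, so $uR\not\subseteq Ru$. For the nonsymmetric claim I would use $uv^2=u(u^2+vu)=uvu$ and $vu^2+vuv=uvu$ (the latter rearranging $vu^2+uvu+vuv=0$) to get
\[
u\cdot v\cdot(u+v)=uvu+uv^2=0,\qquad v\cdot u\cdot(u+v)=vu^2+vuv=uvu\neq 0,
\]
so $R$ is not symmetric.

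The substantive step is reversibility, for which I would copy the style of the minimal symmetric nonduo computation. As $R$ is local, if $ab=0$ with $a$ (or $b$) a unit then $b=0$ (resp. $a=0$) and $ba=0$, so one reduces to $a,b\in J(R)$; writing both in the monomial basis, expanding $ab$, reading off the constraints imposed by $ab=0$, and checking that they force $ba=0$ then finishes the proof (this is carried out in \cite{szabo_2017}). This finite but fairly involved bookkeeping over $J(R)$ is the step I expect to be the main obstacle; everything else is short. Finally, since a reversible ring is reflexive, $R$ indeed belongs on the list of reflexive rings.
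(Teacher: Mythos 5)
Your proof is correct and takes essentially the same route as the paper, which simply cites \cite{szabo_2017} for this result: the order-$256$ lower bound (reversible rings of smaller order are symmetric) and the reversibility, nonsymmetry, and nonduo-ness of the displayed ring are all established there, and this is exactly the ring already noted in the introduction as the non-right-duo minimal reversible nonsymmetric example. The additional direct verifications you sketch (the relation manipulations $uv^2=uvu$ and $vu^2+vuv=uvu$ giving nonsymmetry, and $uv\notin Ru$ giving nonduo via Lemma \ref{lemma_duo}) are consistent with the defining relations and only add detail beyond what the paper records.
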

\begin{proof}
One of the main results of \cite{szabo_2017}.
\end{proof}

\begin{remark}[Reflexive Abelian Nonsemicommutative]
\label{remark_ran}
A minimal reflexive abelian nonsemicommutative ring has turned out to be more elusive than the rest. By Proposition \ref{prop_semi}, a minimal reflexive abelian nonsemicommutative ring is of order at least 64. At the moment, there is no known finite local reflexive nonsemicommutative ring. In \cite{marks_2003}, Lemma 5.3 states that there exists an infinite local ring $R$ that contains a prime ideal $P$ that is not completely prime. So, $R/P$ is a local ring that is prime but not completely prime i.e. a domain, meaning $R/P$ is a prime ring with zero divisors. It is easy to show that a prime ring is reflexive and that prime ring with zero divisors is not semicommutative. Hence, $R/P$ is a local reflexive nonsemicommutative ring. So, such rings do exist. It is important to note that according to \cite{lam_2001} Exercise 13.3, a finite prime ring is a matrix ring over a finite field. Hence, a local finite prime ring is a finite field. This shows there does not exist a finite local prime ring with zero divisors. So, if there does exist a finite reflexive abelian nonsemicommutative ring, it is not a prime ring with zero divisors.
\end{remark}

\begin{example}
\label{ex_nanir}
The following ring is a nonabelian NI reflexive ring of order 64. On the additive group
\[
R=\Fx{2}\oplus\Fx{2}\oplus\F_2\oplus\F_2
\]
define multiplication as
\begin{eqnarray*}
&&(\alpha_1+\beta_1x,\gamma_1+\delta_1x,\epsilon_1,\zeta_1)(\alpha_2+\beta_2t,\gamma_2+\delta_2x,\epsilon_2,\zeta_2)\\
&=&\big(\epsilon_1\zeta_2x+(\alpha_1+\beta_1x)(\alpha_2+\beta_2x),\\
&&\epsilon_2\zeta_1x+(\gamma_1+\delta_1x)(\gamma_2+\delta_2x),\\
&&\alpha_1\epsilon_2+\gamma_2\epsilon_1,\\
&&\gamma_1\zeta_2+\alpha_2\zeta_1\big)
\end{eqnarray*}
for $\alpha_j,\beta_j,\gamma_j,\delta_j,\epsilon_j,\zeta_j\in\F_2$. It is straight forward to check that $R$ is a ring with $1=(1,1,0,0)$. Furthermore, $e_1=(1,0,0,0)$ and $e_2=(0,1,0,0)$ are idempotents which give the ring decomposition of $R$ where the direct summands are $R_1$, $R_2$, $M_{12}$ and $M_{21}$ respectively. First, $R$ is nonabelian since $M\neq 0$. Secondly, since $R_1$ and $R_2$ are local, by Lemma \ref{lemma_atame}(\ref{lemma_atame_2}), $R$ is NI. Finally, it will be shown that $R$ is also reflexive.

Let $a,b\in R$ and assume $aRb=0$. So $a=(a_1,a_2,a_{12},a_{21})$ and $b=(b_1,b_2,b_{12},b_{21})$ for $a_1,a_2,b_1,b_2\in\Fx{2}$ and $a_{12},a_{21},b_{12},b_{21}\in\F_2$. Then
\[
0=a(1,0,0,0)b=(a_1b_1,a_{21}b_{12}x,a_1b_{12},a_{21}b_1),
\]
\[
0=a(0,1,0,0)b=(a_{12}b_{21}x,a_2b_2,a_{12}b_2,a_2b_{21}),
\]
\[
0=a(0,0,1,0)b=(a_{1}b_{21}x,a_{21}b_2x,a_{1}b_2,0)
\]
and
\[
0=a(0,0,0,1)b=(a_{12}b_{1}x,a_{2}b_{12}x,0,a_2b_1).
\]
This shows that $a_1b_1=a_2b_2=0$ and
\[
a_{21}b_{12},a_{12}b_{21},a_{1}b_{21},a_{21}b_2,a_{12}b_{1},a_{2}b_{12},a_1b_{12},a_{21}b_1,a_{12}b_2,a_2b_{21},a_{1}b_2,a_2b_1\in\id{x}.
\]
So, for $r=(r_1,r_2,r_{12},r_{21})\in R$,
\begin{eqnarray*}
bra&=&(b_1,b_2,b_{12},b_{21})(r_1,r_2,r_{12},r_{21})(a_1,a_2,a_{12},a_{21})\\
&=&(b_{1}r_{1}a_{1}+b_{1}r_{12}a_{21}x+b_{12}r_{2}a_{21}x+b_{12}r_{21}a_{1}x,\\
&&b_{2}r_{2}a_{2}+b_{2}r_{21}a_{12}x+b_{21}r_{1}a_{12}x+b_{21}r_{12}a_{2}x,\\
&&b_{1}r_{1}a_{12}+b_{1}r_{12}a_{2}+b_{12}r_{2}a_{2},\\
&&b_{2}r_{2}a_{21}+b_{2}r_{21}a_{1}+b_{21}r_{1}a_{1})\\
&=&0.
\end{eqnarray*}
Hence, $R$ is reflexive.
\end{example}

\begin{theorem}[Minimal NI Nonabelian Reflexive]
A minimal NI nonabelian reflexive ring is of order 64 an example of which is the additive group
\[
R=\Fx{2}\oplus\Fx{2}\oplus\F_2\oplus\F_2
\]
with multiplication as in Example \ref{ex_nanir}.
\end{theorem}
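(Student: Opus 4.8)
I have the order-$64$ example already: Example \ref{ex_nanir} exhibits a ring that is nonabelian, NI and reflexive of order $64$, so it realizes the claimed minimum. Hence the substance of the theorem is the lower bound, and the plan is to show that no NI nonabelian reflexive ring has order smaller than $64$. First I would reduce to prime power order: a finite ring is a direct sum of rings of prime power order for distinct primes, NI and reflexivity pass to and are detected on ring direct summands, and a nonabelian ring has a nonabelian summand; so a minimal NI nonabelian reflexive ring is indecomposable and of order $p^n$ for a single prime $p$. Fix such an $R$ and work with its ring decomposition $\{e_1,\dots,e_m\}$, $R_i=e_iRe_i$, $M_{ij}=e_iRe_j$, $S=\oplus_iR_i$, $M=\oplus_{i\neq j}M_{ij}$.

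The first step extracts the structural constraints from Lemma \ref{lemma_atame}. Being NI forces every $R_i$ to be local (Lemma \ref{lemma_atame}(\ref{lemma_atame_2})); then being nonabelian forces $M\neq 0$ (Lemma \ref{lemma_atame}(\ref{lemma_atame_1})); then being reflexive forces $M^2\neq 0$ (contrapositive of Lemma \ref{lemma_atame}(\ref{lemma_atame_5})). So there are indices with $M_{ij}M_{jk}\neq 0$, necessarily $i\neq j$ and $j\neq k$. The second step is a counting dichotomy. If $i,j,k$ are pairwise distinct, then $M_{ij}$, $M_{jk}$, and $M_{ik}\supseteq M_{ij}M_{jk}$ are three distinct nonzero summands of $M$ while $R_i,R_j,R_k$ are three distinct summands of $S$, so $|R|\geq p^6\geq 64$. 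Otherwise $i=k$; after renumbering $M_{12}M_{21}\neq 0$, and since $M_{12}M_{21}\subseteq R_1\cap J(R)=J(R_1)$ we get $J(R_1)\neq 0$, hence $|R_1|\geq p^2$, while $R_2$, $M_{12}$, $M_{21}$ contribute three further factors of size at least $p$; thus $|R|\geq p^5$. For $p\geq 3$ this already exceeds $64$, so only $p=2$ with $|R|=32$ remains.

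The third step rules out order $32$. Equality $32=|R|\geq|R_1|\,|R_2|\,|M_{12}|\,|M_{21}|\geq 4\cdot 2\cdot 2\cdot 2$ forces $m=2$, $R=R_1\oplus R_2\oplus M_{12}\oplus M_{21}$ with $|R_1|=4$ (so $R_1/J(R_1)\cong\F_2$), $R_2=\F_2$, and $|M_{12}|=|M_{21}|=2$. Since $J(R_2)=0$ we get $M_{21}M_{12}\subseteq J(R_2)=0$. Choosing $d\in M_{12}\setminus 0$ and $e\in M_{21}\setminus 0$, I would argue exactly in the style of the proof of Lemma \ref{lemma_atame}(\ref{lemma_atame_5}): for any $r\in R$, $erd$ lies in $e_2Re_2=R_2$ and reduces, using $R_1=\F_2 e_1\cup(e_1+J(R_1))$, to an $\F_2$-multiple of $ed$ plus a term from $M_{21}M_{12}$, both of which are $0$, so $eRd=0$; on the other hand $d\cdot 1\cdot e=de\in M_{12}M_{21}\neq 0$, so $dRe\neq 0$. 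Hence $eRd=0$ but $dRe\neq 0$, contradicting reflexivity. Combining the steps yields $|R|\geq 64$, matched by Example \ref{ex_nanir}.

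The main obstacle is the order-$32$ analysis: one must be careful that the order really forces the displayed decomposition with no extra idempotent or bimodule slack, and that the computation $eRd=0$ genuinely exploits $R_2=\F_2$ and $M_{21}M_{12}=0$. Everything outside that case is the elementary counting of Step 2, which also makes it unnecessary to invoke the separate classifications of rings of orders $8$ and $16$.
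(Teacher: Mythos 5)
Your proof is correct, but it takes a genuinely different route from the paper. The paper handles the lower bound order by order via classification results: Proposition \ref{prop_eld} for orders $p,p^2,p^3$, Proposition \ref{prop_nc16} for order $p^4$ (where Lemma \ref{lemma_atame}(\ref{lemma_atame_5}) eliminates everything except $\Rn$, which is non-NI), and for order $32$ the Corbas--Williams classification, which isolates the only two rings with $M^2\neq 0$ so they can be checked nonreflexive by hand. You replace all of these citations with a uniform counting argument on the ring decomposition: NI forces every $R_i$ local, nonabelian then forces $M\neq 0$, reflexive forces $M^2\neq 0$, and the two ways a product $M_{ij}M_{jk}$ can be nonzero give $|R|\geq p^6$ (three distinct indices) or $|R|\geq p^5$ with $J(R_1)\neq 0$ (the loop $M_{12}M_{21}\neq 0$); the residual order-$32$ configuration is killed by a direct reflexivity computation in the spirit of Lemma \ref{lemma_atame}(\ref{lemma_atame_5}), using $M_{21}M_{12}\subseteq J(R_2)=0$ to get $eRd\subseteq M_{21}RM_{12}=0$ while $de\neq 0$. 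This is self-contained, works uniformly in $p$, and explains structurally why $64$ is the threshold, whereas the paper's route is shorter on the page but leans on the external classifications of rings of orders $p^4$ and $32$. Two small points of care: your description of $R_1$ as $\F_2e_1\cup(e_1+J(R_1))$ should read $J(R_1)\cup(e_1+J(R_1))$ (equivalently $\F_2e_1+J(R_1)$ additively), though the computation is unaffected since every term of $erd$ lands in $M_{21}M_{12}=0$ anyway; and the identification $R_1\cap J(R)=J(R_1)$ you use is the standard fact $J(e_1Re_1)=e_1J(R)e_1$ (or simply that $M_{12}M_{21}$ is nil, so $R_1$ cannot be a field), which is worth a word in a final write-up.
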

\begin{proof}
For a prime $p$, Proposition \ref{prop_eld} shows that the only indecomposable nonabelian ring of order $p$, $p^2$ or $p^3$ is $\Rav{p}$. But this ring is nonreflexive (see Theorem \ref{theo_minref}). From Lemma \ref{lemma_atame}(\ref{lemma_atame_5}) and Proposition \ref{prop_nc16}, the only indecomposable nonabelian reflexive ring of order $p^4$ is $\Rnv{p}$ since all the others have $M\neq 0$ but $M^2=0$ in their ring decomposition. But $\Rnv{p}$ is non-NI by Proposition \ref{prop_2prim}. In \cite{corbas_2000}, it was shown that there are only 2 rings of order 32 which have a ring decomposition with $M^2\neq 0$ (see Lemma 1.5 in \cite{corbas_2000}). In either one of those two rings, $(0,0,1,0)R(0,0,0,1)\neq 0$ but $(0,0,0,1)R(0,0,1,0)=0$ so they are nonreflexive. By Lemma \ref{lemma_atame}, a nonabelian NI ring with $M^2=0$ (nonabelian NI guarantees $M\neq0$) is nonreflexive. So, all indecomposable nonabelian rings of order 32 are nonreflexive. Hence, there are no NI nonabelian reflexive rings of order less than 64. $R$ was shown to be an NI nonabelian reflexive ring of order 64 in Example \ref{ex_nanir}.
\end{proof}

\begin{theorem}[Minimal Non-NI Reflexive]
The minimal non-NI reflexive rings is $\Rn$.
\end{theorem}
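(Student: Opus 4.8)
The plan is to show that $\Rn = M_2(\F_2)$ is itself reflexive and non-NI, and that no smaller ring can have both properties. The non-NI part is immediate from Proposition~\ref{prop_2prim}, which identifies $\Rn$ as the minimal non-NI ring (of order $8 \cdot 2 = 16$), so in particular every ring of order less than $16$ is NI. Reflexivity of $\Rn$ is given by Proposition~\ref{prop_refl}, where $\Rn$ appears in the list of minimal noncommutative reflexive rings. So the two stated properties hold for $\Rn$.

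For minimality, I would argue as follows. Any non-NI ring has order at least $16$ by Proposition~\ref{prop_2prim}, and the only non-NI rings of order $16$ are $\Rn = M_2(\F_2)$ (equivalently $\Rnv{2}$; note $\Rnv{3}$ has order $81 > 16$). Since $\Rn$ is reflexive, it is a non-NI reflexive ring of order $16$, and no non-NI ring of smaller order exists, so $\Rn$ is minimal. One should also note that being non-NI and reflexive is not inherited by direct summands in the obvious way, but for the minimality count it suffices that there is simply no non-NI ring below order $16$, so no decomposable candidate of smaller order can arise either. Hence $\Rn$ is \emph{the} minimal non-NI reflexive ring.

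The only mild subtlety — really just bookkeeping — is to confirm uniqueness: is $\Rn$ the \emph{unique} non-NI reflexive ring of order $16$? By Proposition~\ref{prop_2prim} the only non-NI ring of order $16$ at all is $\Rnv{2} = \Rn$, so uniqueness is automatic. I do not anticipate a genuine obstacle here; the entire statement is a direct corollary of Propositions~\ref{prop_2prim} and~\ref{prop_refl}, and the proof is essentially a one-line citation of those two results together with the observation that $\Rn$ appears in both relevant lists.

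\begin{proof}
By Proposition~\ref{prop_refl}, $\Rn$ is reflexive, and by Proposition~\ref{prop_2prim}, $\Rn$ is non-NI and, moreover, is the minimal non-NI ring; in particular every ring of order less than $16$ is NI. Thus $\Rn$, of order $16$, is a non-NI reflexive ring, and no non-NI ring of smaller order exists. Since by Proposition~\ref{prop_2prim} the ring $\Rnv{2} = \Rn$ is the only non-NI ring of order $16$, $\Rn$ is the unique, hence minimal, non-NI reflexive ring.
\end{proof}
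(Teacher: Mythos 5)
Your proof is correct, but it runs in the opposite direction from the paper's. You start from Proposition~\ref{prop_2prim}: every ring of order less than $16$ is NI, the unique non-NI ring of order $16$ is $\Rn$, and that ring happens to be reflexive by Proposition~\ref{prop_refl} --- so it is automatically the minimal non-NI reflexive ring. The paper instead starts from the reflexive side: since a non-NI ring is noncommutative, it invokes Proposition~\ref{prop_refl} to get the two minimal noncommutative reflexive rings $\Rd$ and $\Rn$ (both of order $16$), then eliminates $\Rd$ because it is semicommutative (Proposition~\ref{prop_semic}) and hence NI, and confirms $\Rn$ is non-NI via Lemma~\ref{lemma_atame}. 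Your route is arguably the more economical one: because the minimal non-NI ring is \emph{unique} and turns out to be reflexive, no case analysis among reflexive candidates is needed, and uniqueness of the answer comes for free. The paper's route has the mild advantage of not needing the full classification of non-NI rings of order $p^4$, only the already-quoted list of minimal noncommutative reflexive rings. Your handling of decomposable candidates below order $16$ (reduce to prime-power summands of order at most $p^3$, all NI) is the right justification and matches the reduction the paper sets up before Proposition~\ref{prop_eld}.
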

\begin{proof}
By Proposition Proposition \ref{prop_refl}, there are two minimal noncommutative reflexive rings, $\Rd$ and $\Rn$. By Proposition \ref{prop_semic}, $\Rd$ is semicommutative and therefore NI. By Lemma \ref{lemma_atame}(\ref{lemma_atame_2}), $\Rn$ is not NI.
\end{proof}

\section{Conclusion}
Minimal rings of all but one of the types of rings under consideration were found. Reflexive abelian nonsemicommutative, proved more elusive and is left for future consideration. From Proposition \ref{prop_semi}, such a ring is at least of order 64. There is no known example of a finite local reflexive nonsemicommutative ring. It was however shown  in Remark \ref{remark_ran} that an infinite ring of this type exists.

\appendix
\bibliographystyle{amsplain}
\bibliography{../SteveSzaborefs}

\end{document}